\addspace\texttt{\mkbibbrackets{\thefield{arxivclass}}}}}}
\addspace\texttt{\mkbibbrackets{\thefield{arxivclass}}}}}}
\newtheorem{theorem}{Theorem}[section]
\newtheorem{lemma}[theorem]{Lemma}
\newtheorem{proposition}[theorem]{Proposition}
\newtheorem{corollary}[theorem]{Corollary}
\newtheorem{remark}[theorem]{Remark}
\def\rr{\mathbb{R}}
\def\o{\omega}
\def\O{\Omega}
\def\p{\partial}
\def\g{\gamma}
\def\p{\partial}
\def\S{{\Sigma}}
\def\<{\langle}
\def\>{\rangle}
\def\div{{\rm div}}
\def\R{{\mathbb R}}
\def\om{\omega}
\def\Om{\Omega}
\def\p{\partial}
\newcommand{\mbR}{\mathbb{R}}
\newcommand{\mbS}{\mathbb{S}}
\newcommand{\mcH}{\mathcal{H}}
\newcommand{\mcP}{\mathcal{P}}
\newcommand{\mcW}{{\mathcal{W}}}
\newcommand{\ra}{\rightarrow}
\numberwithin{equation} {section}
\begin{document}
	
	\title[Alexandrov's theorem ]{Alexandrov's theorem for anisotropic capillary hypersurfaces in the half-space}

		\author{Xiaohan Jia}
	\address{School of Mathematical Sciences\\
		Xiamen University\\
		361005, Xiamen, P.R. China}
	\email{jiaxiaohan@xmu.edu.cn}
	
		\author{Guofang Wang}
	\address{Mathematisches Institut\\
		Universit\"at Freiburg\\
	Ernst-Zermelo-Str.1,
		79104, Freiburg, Germany}
	\email{guofang.wang@math.uni-freiburg.de}
	
	\author{Chao Xia}
	\address{School of Mathematical Sciences\\
		Xiamen University\\
		361005, Xiamen, P.R. China}
	\email{chaoxia@xmu.edu.cn}
	\author{Xuwen Zhang}
	\address{School of Mathematical Sciences\\
		Xiamen University\\
		361005, Xiamen, P.R. China}
	\email{xuwenzhang@stu.xmu.edu.cn}
	\thanks{This work is  supported by the National Natural Science Foundation of China of  (Grant No. 11871406, 12271449, 12126102)}
	
	\begin{abstract}
	In this paper, we show that any embedded capillary hypersurface in the half-space with anisotropic constant mean curvature is a truncated Wulff shape. This extends Wente's result \cite{Wente80} to the anisotropic case and  He-Li-Ma-Ge's result \cite{HLMG09} to the capillary boundary case. The main ingredients in the proof are a new Heintze-Karcher inequality and a new Minkowski  formula, which have their own inetrest.
		
		\
		
	\noindent {\bf MSC 2020: 53C24, 35J25, 53C21}\\
		{\bf Keywords:}   Alexandrov's theorem, Anisotropic mean curvature,
		capillary hypersurface, Heintze-Karcher's inequality, Minkowski's formula.
		
	\end{abstract}
	
	\maketitle
	
	\medskip
	
\section{Introduction}

Capillary phenomena appear in the study of the equilibrium shape of liquid drops and crystals in a given solid container. The mathematical model has been established through the work of Young, Laplace,  Gauss and others,  as a variational problem on minimizing a free energy functional under a  volume constraint. A modern formulation of Gauss' model includes a possibly anisotropic surface tension density, which we are interested in. 
For more detailed description on the isotropic and anisotropic capillary phenomena, we refer to \cite{Finn86} and \cite{DM15}.

For our purpose, we consider the anisotropic capillary problem in the half-space$$\mathbb{R}^{n+1}_+=\{x\in \mathbb{R}^{n+1}: \<x, E_{n+1}\>> 0\}.$$Here $E_{n+1}$ denotes the $(n+1)$-coordinate unit vector.
Let $\Sigma$ be a compact orientable embedded hypersurface in $\bar{\mathbb{R}}_+^{n+1}$ with  boundary $\partial\Sigma$ lying on $\p{\mathbb{R}}_+^{n+1}$, which, together with $\p{\mathbb{R}}_+^{n+1}$, encloses a bounded domain $\O$. Let   $\nu$ be the unit  normal of $\Sigma$ pointing outward $\O$.
We consider the free energy functional
\begin{align*}
    \mathcal{E}(\S)=\int_{\Sigma}  F(\nu) dA+\omega_0|\p\Om\cap\p\mbR^{n+1}_+|,
\end{align*}
where the term $\int_{\Sigma}  F(\nu) dA$ is the anisotropic surface tension and the term $\omega_0|\p\Om\cap\p\mbR^{n+1}_+|$ is the wetting energy accounting for the adhesion between the fluid and the walls of the container.
Here $F: \mathbb{S}^n\to \mathbb{R}_+$ is a $C^2$ positive function on $\mathbb{S}^n$ such that $(\nabla^2 F+F \sigma)>0$,
where $\sigma$ is the canonical metric  on $\mbS^n$ and $\nabla^2$ is the Hessian on $\mbS^n$, and $\omega_0\in \mathbb{R}$ is a given constant. The Cahn-Hoffman map associated with $F$ is given by
\begin{eqnarray*}
\Phi:\mathbb{S}^n\to\mathbb{R}^{n+1},\quad \Phi(x)= \nabla F(x)+F(x)x.
\end{eqnarray*}
where $\nabla$ denotes the gradient on $\mbS^n$. The image $\Phi(\mbS^n)$ of $\Phi$ is a strictly convex, closed hypersurface in $\mbR^{n+1}$, which is the unit Wulff shape with respect to $F$, which we denote by $\mathcal{W}_F$. 

In the isotropic case $F\equiv 1$, the global minimizer of $\mathcal{E}$ under a volume constraint is characterized as a spherical cap by De Giorgi, which is the solution to the relative isoperimetric problem, see for example \cite[Chapter 19]{Mag12}. In the anisotropic case, the global minimizer of $\mathcal{E}$ under volume constraint has been characterized by Winterbottom \cite{Win67} as a truncated Wulff shape, which is also called a Winterbottom shape, or Winterbottom construction in applied mathematics, especially in material science, see for example \cite{Bao} and references therein. The Winterbottom construction can be viewed as the capillary counterpart of Wulff construction, which characterizes the global minimizer for purely anisotropic surface tension, see \cite{Wulff, T78, FM91}.
For anisotropic free energy functionals involving a gravitational potential energy term,  
the existence, the regularity and  boundary regularity of global minimizers have been studied  by De Giorgi \cite{D61}, Almgren, \cite{A68} and Taylor \cite{T77}. See also the recent work by De Philippis and Maggi \cite{DM15, DM17}.
For 
the symmetry and uniqueness of global minimizers  we refer to  the work of Baer \cite{Ba15} for a class of $F$ with certain symmetry, following the work of Gonzalez \cite{Gon76} in the isotropic case, via a symmetrization technique. 

In this paper, we shall study the rigidity for the stationary surfaces for the free energy functional $\mathcal{E}$ under a volume constraint.
Given a variation $\{\S_t\}$ of $\S$, whose boundary $\p\S_t$ moves freely on $\p\rr^{n+1}_+$ and according to a variational vector field $Y$ such that $Y|_{\p\S}\in T(\p\rr^{n+1}_+)$, the first variation formula of $\mathcal{E}$  is given by
$$\frac{d}{dt}\Big|_{t=0}\mathcal{E}(\S_t)=\int_\S H^F\left<Y,\nu\right>dA+\int_{\p\S}\left<Y,R(p(\Phi(\nu)))\right> ds,$$
where $H^F$ is the anisotropic mean curvature of $\S$,  $p$ is the projection onto the $\{\nu, E_{n+1}\}$-plane and $R$ is the $\pi/2$-rotation in the $\{\nu, E_{n+1}\}$-plane, see \cite{Pal98, KP06, Koi23}.
It follows that 
the stationary points of $\mathcal{E}$ among $C^2$ hypersurfaces under a volume constraint are anisotropic $\omega_0$-capillary hypersurfaces with constant anisotropic mean curvature. In this paper we say a hypersurface in $\R^{n+1}_+$ with boundary $\p\S\subset \p \R^{n+1}_+$  {\it anisotropic $\omega_0$-capillary} if 
\begin{align}\label{acbc}
    \<\Phi(\nu),-E_{n+1}\>=\omega_0,\quad\text{ on }\p\S.
\end{align}
We emphasize that it is not necessary a constant anisotropic mean curvature hypersurface. Moreover  we are interested in hypersurfaces which  intersect  with $\p\rr^{n+1}_+$ transversely. 

The rigidity of embedded CMC closed hypersurfaces was obtained by Alexandrov \cite{Aleksandrov62} in 
the celebrated Alexandrov's theorem, that any embedded closed hypersurface  of constant mean curvature in $\rr^{n+1}$ must be a sphere. In the proof he introduced the famous  moving plane method.
Wente \cite{Wente80} showed that any embedded compact hypersurface of constant mean curvature with capillary boundary in $\rr^{n+1}_+$ is a spherical cap.
Taking into account of the anisotropy, He-Li-Ma-Ge \cite{HLMG09} proved that any embedded closed hypersurface in $\rr^{n+1}$ with constant anisotropic mean curvature must be a Wulff shape. See also a related result by Morgan \cite{Mor05} in $\rr^2$ for a more general anisotropic function $F$. For the closely related work  on the stability problem of constant anisotropic mean curvature hypersurface without boundary or with capillary boundary, we refer to \cite{Pal98, HL08a, KP06, KP07a, KP07b, KP12, Koi23} and references therein.

Our main result in this paper is the following Alexandrov type theorem for embedded anisotropic capillary hypersurfaces of constant anisotropic mean curvature in $\rr^{n+1}_+$.
\begin{theorem}\label{Thm-Alexandrov0}
		Let $\om_0\in\left(-F(E_{n+1}),F(-E_{n+1})\right)$. Let $\S\subset \bar \rr^{n+1}_+$ be a $C^2$ embedded compact anisotropic $\om_0$-capillary hypersurface with constant anisotropic mean curvature. Then
	$\S$ is an $\om_0$-Wulff shape.
\end{theorem}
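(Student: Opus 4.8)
The plan is to carry out the Ros--Montiel argument for the Alexandrov theorem in the anisotropic capillary setting: combine an anisotropic Minkowski formula with a Heintze--Karcher-type inequality and extract rigidity from the equality case of the latter (the moving-plane method is less natural here, since $F$ need not be symmetric under reflection in $\p\rr^{n+1}_+$). Write $T=\ov\Om\cap\p\rr^{n+1}_+$ for the wetted region, so that $\p\Om=\S\cup T$. Two observations organize the boundary bookkeeping. First, the position field $x$ is tangent to $\p\rr^{n+1}_+$ along $\p\S$ (where $\<x,E_{n+1}\>=0$), hence admissible as a variational field; and since $\<x,E_{n+1}\>=0$ on $T$ as well, the divergence theorem on $\Om$ still yields $\int_\S\<x,\nu\>\,dA=(n+1)|\Om|$ and $\int_\S\<\nu,E_{n+1}\>\,dA=|T|$, exactly as in the closed case. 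Second, the shifted anisotropic normal $\Phi(\nu)+\om_0E_{n+1}$ is tangent to $\p\rr^{n+1}_+$ along $\p\S$, because $\<\Phi(\nu)+\om_0E_{n+1},E_{n+1}\>=\<\Phi(\nu),E_{n+1}\>+\om_0=0$ is precisely the capillary condition \eqref{acbc}; this is the field along which the Heintze--Karcher parallel flow must be run.

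\textbf{Step 1 (Minkowski formula and the sign of $H^F$).} Testing the first variation formula of $\mathcal E$ recalled in the introduction with $Y=x$ --- equivalently, differentiating $\mathcal E((1+t)\S)=(1+t)^n\mathcal E(\S)$ --- and using \eqref{acbc} to control the resulting boundary integral, one should arrive at a Minkowski-type identity whose interior part has the shape $\int_\S\big(n\,\ov F_{\om_0}(\nu)-H^F\<x,\nu\>\big)\,dA=0$, where $\ov F_{\om_0}(\nu):=F(\nu)+\om_0\<\nu,E_{n+1}\>$. When $H^F$ is constant, the two divergence identities above convert this into $n\big(\int_\S F(\nu)\,dA+\om_0|T|\big)=H^F(n+1)|\Om|$; the left-hand side is positive for $\om_0$ in the admissible interval, so $H^F>0$.

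\textbf{Step 2 (Heintze--Karcher, forced equality, and rigidity).} The second ingredient should state: for an embedded $\om_0$-capillary hypersurface with $H^F>0$, one has $\int_\S\tfrac{n\,\ov F_{\om_0}(\nu)}{H^F}\,dA\ge(n+1)|\Om|$, with equality only for $\om_0$-Wulff shapes. I would prove it through the parallel map $\zeta(x,t)=x-t\big(\Phi(\nu(x))+\om_0E_{n+1}\big)$: its Jacobian factors, up to sign, as $\ov F_{\om_0}(\nu)\det(\mathrm{Id}-t\,S_F)$ with $S_F$ the anisotropic Weingarten operator; the tangency noted above keeps the rays issuing from $\p\S$ inside $\ov\rr^{n+1}_+$; embeddedness together with a touching-Wulff-shape argument shows $\zeta$ covers $\Om$; and AM--GM on the anisotropic principal curvatures closes the inequality. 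Feeding the constant value of $H^F$ from Step 1 into this inequality and comparing with the Minkowski identity forces equality throughout. Equality in AM--GM forces all anisotropic principal curvatures to coincide at each point, so they equal $H^F/n$ and are constant; hence $\S$ is anisotropically umbilical and therefore an open piece of a translated homothety $r\w_F+\vec c$ of the Wulff shape, with $r=n/H^F$. The capillary condition on $\p\S\subset\p\rr^{n+1}_+$ then forces $\<\vec c,E_{n+1}\>=r\om_0$, and strict convexity of $\w_F$ together with compactness and embeddedness of $\S$ identify $\S$ with the part of $r\w_F+\vec c$ lying in $\ov\rr^{n+1}_+$, i.e. with an $\om_0$-Wulff shape.

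\textbf{Main obstacle.} The crux is Step 2 --- establishing the new Heintze--Karcher inequality and, above all, pinning down its rigidity case --- for capillary hypersurfaces. One must show the parallel map genuinely covers $\Om$, using embeddedness and transversal intersection with $\p\rr^{n+1}_+$; it is here, and in controlling the sign of $\ov F_{\om_0}$ along the normals that actually occur, that the hypothesis $\om_0\in(-F(E_{n+1}),F(-E_{n+1}))$ is essential. A secondary difficulty is identifying the exact boundary term in the Minkowski formula: since $F$ is not reflection-invariant one cannot double $\S$ across $\p\rr^{n+1}_+$ and quote the closed case, so the capillary condition \eqref{acbc} must be exploited directly to reduce $\int_{\p\S}\<x,R(p(\Phi(\nu)))\>\,ds$ to the quantities $|T|$ and $|\Om|$.
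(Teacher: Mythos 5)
Your overall strategy coincides with the paper's: an anisotropic Minkowski formula plus a capillary Heintze--Karcher inequality, combined following Ros's argument, with rigidity extracted from the equality case. However, there is a concrete gap that prevents your argument from reaching the full range $\om_0\in\left(-F(E_{n+1}),F(-E_{n+1})\right)$: you run everything with the weight $F(\nu)+\om_0\<\nu,E_{n+1}\>$ and the parallel map $\zeta(x,t)=x-t\left(\Phi(\nu)+\om_0E_{n+1}\right)$, i.e.\ with the Euclidean vector $E_{n+1}$. The paper instead uses the anisotropically normalized vector $E^F_{n+1}=\Phi(E_{n+1})/F(E_{n+1})$ when $\om_0<0$ (resp.\ $-\Phi(-E_{n+1})/F(-E_{n+1})$ when $\om_0>0$), which still satisfies $\<E^F_{n+1},E_{n+1}\>=1$, so all of your boundary bookkeeping (tangency of the shifted normal to $\p\rr^{n+1}_+$, $\int_\S\<\nu,E^F_{n+1}\>dA=|T|$) survives the substitution. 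The substitution is essential at exactly the two places you identify as the crux. First, the Jacobian factor must be pointwise positive for the AM--GM step: with $E^F_{n+1}$ one gets $F(z)+\om_0\<z,E^F_{n+1}\>>0$ for all $z\in\mbS^n$ from $F^o(\Phi(\pm E_{n+1}))=1$ and the Cauchy--Schwarz inequality \eqref{Cau-Sch} (\cref{Prop-non-negative}); with $E_{n+1}$ the same argument only gives positivity when $|\om_0|<1/F^o(\pm E_{n+1})$, and $1/F^o(E_{n+1})\le F(E_{n+1})$ with strict inequality in general. Second, surjectivity of $\zeta$ onto $\O$ is proved by inflating the Wulff shapes $\mcW_r(y+r\om_0E^F_{n+1})$ from $r=0$; the point $y$ remains enclosed for small $r$ precisely because $F^o(-\om_0E^F_{n+1})<1$, which again relies on $F^o(\Phi(\pm E_{n+1}))=1$ and fails for $E_{n+1}$ outside the smaller interval (\cref{rem-2}). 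As written, your proof therefore establishes the theorem only for $\om_0\in\left(-1/F^o(E_{n+1}),1/F^o(-E_{n+1})\right)$.

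A secondary omission: in the covering argument you must exclude the possibility that the expanding Wulff shape first touches $\S$ at a point of $\p\S$. Isotropically this is a trivial angle comparison, but anisotropically it requires the monotonicity of $t\mapsto\<\Phi(\gamma(t)),z\>$ along minimizing geodesics of $\mbS^n$, proved from $\nabla^2F+F\sigma>0$ (\cref{Prop-angle-compare}); your proposal does not supply this ingredient. The rest — the Minkowski formula (your variational derivation is equivalent to the paper's divergence computation with the tangential field $F(\nu)x-\<x,\nu\>\nu_F$, and the two versions of the formula coincide since $\int_\S\<\nu,E_{n+1}\>dA=\int_\S\<\nu,E^F_{n+1}\>dA$), the positivity of $H^F$ (which the paper obtains from an elliptic-point/touching argument rather than from the Minkowski identity), and the equality-case discussion — matches the paper once $E_{n+1}$ is replaced by $E^F_{n+1}$.
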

An $\om_0$-Wulff shape is part of a Wulff shape in $\bar \rr^{n+1}_+$ such that the anisotropic  capillary boundary condition \eqref{acbc} holds.
We remark that the assumption $\om_0\in\left(-F(E_{n+1}),F(-E_{n+1})\right)$  is a necessary condition so that Wulff shapes intersect with $\p\rr_+^{n+1}$ transversely, see \cref{rem-necc}.

As mentioned above, \cref{Thm-Alexandrov0} for the isotropic case was proved by Wente in \cite{Wente80}, where he used Alexandrov's moving plane method. However, the moving plane method fails in general for the anisotropic case, at least if $F$ has less symmetry. A new proof of Wente's result has been done by the authors \cite{JWXZ22} through the establishment of a Heintze-Kacher-type inequality in the capillary problem, which is inspired by the original idea of Heintze-Karcher \cite{HK78} (see also Montiel-Ros \cite{MR91}). This method is flexible to the anisotropic case and this is the way we achieve \cref{Thm-Alexandrov0}. 

Following this way we first need to establish a Heintze-Harcher type inequality for anisotropic capillary hypersurfaces.
In order to state the inequality, we need a constant vector $E^F_{n+1}\in\mathbb{R}^{n+1}$ defined as
\begin{equation}\label{eq-E-F}
    E^F_{n+1}=\left\{
    \begin{array}{lrl}  \frac{\Phi(E_{n+1})}{F(E_{n+1})}, \quad &\text{if } \om_0<0,\\
    -\frac{\Phi(-E_{n+1})}{F(-E_{n+1})}, \quad &\text{if } \om_0>0.
  %
  \end{array}
  \right.
\end{equation} When $\om_0=0$, one can define it by any unit vector. This constant vector plays a crucial role in the paper.
A hypersurface is said to be strictly anisotropic-mean convex if $H^F>0$. 
Now  we state our anisotropic Heintze-Karcher inequality.
\begin{theorem}\label{Thm-HK}
   Let $\om_0\in\left(-F(E_{n+1}),F(-E_{n+1})\right)$ 
  and  $\S\subset\bar\mbR_+^{n+1}$ be a $C^2$ compact embedded strictly anisotropic-mean convex hypersurface with boundary $\p\S\subset \p\mbR_+^{n+1}$ such that 
  \begin{align}\label{capi-ass}
  \<\Phi(\nu(x)), -E_{n+1}\>=\om(x)\le \om_0,\quad  \hbox{ for  any }x\in \partial\Sigma.\end{align} Then it holds
    \begin{align}\label{eq-HK0}
     \int_\S\frac{F(\nu)+\om_0\left<\nu,E^F_{n+1}\right>}{H^F}dA\geq\frac{n+1}{n}\vert\O\vert.
    \end{align}
    Equality in \eqref{eq-HK0} holds if and only if $\S$ is an $\om_0$-capillary Wulff shape. 
\end{theorem}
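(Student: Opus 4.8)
The plan is to run the Heintze--Karcher--Montiel--Ros argument along a \emph{tilted anisotropic normal flow} that absorbs the capillary boundary into the flow by means of the vector $E^F_{n+1}$. \emph{Step 1 (positivity of the weight).} First I would check that $F(\nu)+\om_0\<\nu,E^F_{n+1}\>>0$ for every $\nu\in\mbS^n$. Since the Wulff shape $\mcW_F$ is convex with support function $F$ and $\Phi(\pm E_{n+1})\in\mcW_F$, we have $\<\nu,\Phi(\pm E_{n+1})\>\le F(\nu)$; inserting the definition \eqref{eq-E-F} of $E^F_{n+1}$ and using $\om_0\in(-F(E_{n+1}),F(-E_{n+1}))$ gives, for $\om_0\le 0$,
\[
F(\nu)+\om_0\<\nu,E^F_{n+1}\>\ \ge\ F(\nu)\,\frac{F(E_{n+1})+\om_0}{F(E_{n+1})}\ >\ 0,
\]
and symmetrically for $\om_0\ge0$. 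Along the way one records $\<E^F_{n+1},E_{n+1}\>=1$, which holds because $\n F(\pm E_{n+1})$ is tangent to $\mbS^n$, hence orthogonal to $E_{n+1}$.

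\emph{Step 2 (the flow map and its Jacobian).} Define $\Psi\colon\S\times[0,\infty)\to\mbR^{n+1}$ by $\Psi(x,t)=x-t\,\Phi(\nu(x))-t\,\om_0 E^F_{n+1}$. Since $\Phi(\nu)$ has component $F(\nu)$ in the direction $\nu$, since the additional constant term $-t\om_0 E^F_{n+1}$ affects only $\p_t\Psi$, and since the differential of the anisotropic Gauss map $\Phi\circ\nu$ is the anisotropic Weingarten operator $S_F$ (with eigenvalues $\kappa^F_1,\dots,\kappa^F_n$ and $\sum_i\kappa^F_i=H^F$), a direct computation gives
\[
\operatorname{Jac}\Psi(x,t)=\bigl(F(\nu(x))+\om_0\<\nu(x),E^F_{n+1}\>\bigr)\,\det\bigl(\mathrm{Id}-tS_F(x)\bigr).
\]
Set $Z=\{(x,t):x\in\S,\ 0\le t\le t^{*}(x)\}$, where $t^{*}(x)$ is the anisotropic cut value: the supremum of those $t$ for which $s\mapsto\Psi(x,s)$ is, on $[0,t]$, a minimizing segment in $\bar\O$ with $\mathrm{Id}-sS_F(x)$ positive definite. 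On $Z$ one has $\det(\mathrm{Id}-tS_F)=\prod_i(1-t\kappa^F_i)>0$; since these factors stay positive, their arithmetic mean $1-\tfrac tnH^F$ is positive, forcing $t^{*}(x)\le n/H^F(x)$, and the arithmetic--geometric mean inequality gives $\prod_i(1-t\kappa^F_i)\le(1-\tfrac tnH^F)^n$.

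\emph{Step 3 (covering $\bar\O$ --- the main obstacle).} The crux is to show $\Psi(Z)\supseteq\bar\O$. I would prove it by a connectedness argument in the spirit of Montiel--Ros \cite{MR91}: the set $\{p\in\bar\O:p\in\Psi(Z)\}$ is closed, relatively open in $\bar\O$ (near an interior point $p=\Psi(x_0,t_0)$ with $t_0$ minimal, one has $t_0<t^{*}(x_0)$ and $\Psi$ is a local diffeomorphism there), and contains $\S=\Psi(\S\times\{0\})$. What remains is to reach the flat part $T=\p\O\cap\p\mbR^{n+1}_+$, i.e.\ to control the flow lines issuing from a neighbourhood of $\p\S$; this is exactly where the hypothesis \eqref{capi-ass} is used. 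Because $\<\Phi(\nu),-E_{n+1}\>=\om(x)\le\om_0$ on $\p\S$ and $\<E^F_{n+1},E_{n+1}\>=1$, the tilted direction $\Phi(\nu(x))+\om_0E^F_{n+1}$ points into $\O$ at boundary points and has nonpositive last coordinate, which prevents points of $\O$ from escaping $\Psi(Z)$ and forces the first contact of the growing anisotropic parallel body of any $p\in\O$ --- measured with the Minkowski functional of the Wulff body translated by $\om_0E^F_{n+1}$, which is an admissible gauge exactly because $\om_0\in(-F(E_{n+1}),F(-E_{n+1}))$ --- to occur in the interior of $\S$. Making this rigorous, together with the careful treatment of the corner along $\p\S$, is the main technical work I anticipate.

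\emph{Step 4 (conclusion and equality).} Granting the covering, the area formula and Steps 1--2 yield
\[
\vert\O\vert\ \le\ \int_Z\operatorname{Jac}\Psi\ =\ \int_\S\bigl(F(\nu)+\om_0\<\nu,E^F_{n+1}\>\bigr)\!\!\int_0^{t^{*}(x)}\!\!\prod_i(1-t\kappa^F_i)\,dt\,dA\ \le\ \frac{n}{n+1}\int_\S\frac{F(\nu)+\om_0\<\nu,E^F_{n+1}\>}{H^F}\,dA,
\]
using $\int_0^{n/H^F}(1-\tfrac tnH^F)^n\,dt=\tfrac{n}{(n+1)H^F}$; this is \eqref{eq-HK0}. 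If equality holds, then the arithmetic--geometric mean inequality is an equality almost everywhere, so $\kappa^F_1=\dots=\kappa^F_n$ on $\S$, i.e.\ $\S$ is anisotropically umbilic; a Codazzi-type identity then upgrades this to $H^F\equiv\mathrm{const}$, so $\S$ is a piece of a Wulff shape, say of $r\mcW_F+b$, and tracing the equality in Step 3 along $\p\S$ forces $\om(x)\equiv\om_0$, so $\S$ is an $\om_0$-capillary Wulff shape. Conversely, for an $\om_0$-capillary Wulff shape one checks directly that $\Psi(x,t)=(1-\tfrac tr)x+\tfrac tr\,(b-r\om_0E^F_{n+1})$, so all tilted normal segments converge to the single point $b-r\om_0E^F_{n+1}$ and sweep $\bar\O$ exactly once, giving equality.
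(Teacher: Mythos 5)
Your overall strategy is the same as the paper's: sweep $\O$ by the tilted anisotropic normal map $\zeta_F(x,t)=x-t\left(\Phi(\nu(x))+\om_0E^F_{n+1}\right)$, compute its Jacobian, and apply the area formula together with the AM--GM inequality. Your Step 1 is the paper's \cref{Prop-non-negative}, your Step 2 matches the paper's Jacobian computation, and your Step 4 is essentially the paper's conclusion (the paper closes the equality case by combining the Minkowski formula \eqref{formu-Minkowski} with the constancy of $H^F$ to force $\om\equiv\om_0$, which is more concrete than ``tracing the equality in Step 3'', but both are workable once the covering is in place).

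The genuine gap is in Step 3, and you have correctly located it but not filled it. The paper proves $\O\subset\zeta_F(Z)$ by growing, for each $y\in\O$, the family of Wulff shapes $\mcW_r(y+r\om_0E^F_{n+1})$ until first contact with $\S$; the whole difficulty is to exclude a first contact at a point $x\in\p\S$. At such a point the interior tangency only yields the \emph{isotropic} comparison $\<\nu(x),-E_{n+1}\>\geq\<\nu^{\mcW}(x),-E_{n+1}\>$, whereas the capillarity hypothesis \eqref{capi-ass} controls the \emph{anisotropic} quantity $\<\Phi(\nu(x)),-E_{n+1}\>$. Passing from the former to $\<\Phi(\nu(x)),-E_{n+1}\>\geq\<\Phi(\nu^{\mcW}(x)),-E_{n+1}\>$ is exactly what contradicts $\<\Phi(\nu^{\mcW}(x)),-E_{n+1}\>=\frac{1}{r_y}\<y,E_{n+1}\>+\om_0>\om_0\geq\om(x)$, and it is \emph{not} automatic: it requires the anisotropic angle comparison principle (\cref{Prop-angle-compare}), i.e.\ the monotonicity of $t\mapsto\<\Phi(\gamma(t)),z\>$ along a minimizing geodesic $\gamma$ toward $z$, which in turn uses the ellipticity $\nabla^2F+F\sigma>0$. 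In the isotropic case $\Phi=\mathrm{id}$ and this step is trivial, which is presumably why your sketch treats it as routine bookkeeping about ``the tilted direction pointing into $\O$''; in the anisotropic case it is the main new lemma of the proof, and your proposal neither states nor proves it. Until that comparison is supplied, the covering claim $\O\subset\zeta_F(Z)$ --- and hence the inequality \eqref{eq-HK0} --- is not established.
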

We will follow the argument in \cite{JWXZ22} to prove \cref{Thm-HK}. The main idea is to define suitable parallel hypersurfaces $\zeta_F(\cdot, t)$, in order to sweepout the enclosed domain $\O$ and use the area formula to compute the volume. A crucial ingredient is an anisotropic angel comparison principle in \cref{Prop-angle-compare} which enables us to prove the surjectivity of $\zeta_F$.

Then we need the following anisotropic Minkowski type formula.
\begin{theorem}
   \label{Prop-Minkowski0}
 Let $\om_0\in\left(-F(E_{n+1}),F(-E_{n+1})\right)$ 
  and $\S\subset\mbR^{n+1}_+$ be a $C^2$ anisotropic $\om_0$-capillary hypersurface. Let $H_r^F$ be the (normalized) anisotropic $r$-th mean curvature for some $r\in \{1,\ldots,n\}$ and $H_0^F\equiv1$ by convention. Then it holds
\begin{align}\label{formu-Minkowski}
    \int_\S H^F_{r-1}\left(F(\nu)+\om_0\left<\nu, E^{F}_{n+1}\right>\right)-H^F_r\left<x,\nu\right>dA=0.
\end{align}    
In particular,
\begin{align}\label{formu-Minkowski_1}
    \int_\S \left(F(\nu)+\om_0\left<\nu, E^{F}_{n+1}\right>\right)-H^F_1\left<x,\nu\right>dA=0.
\end{align} 
\end{theorem}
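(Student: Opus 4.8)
The plan is to follow the scheme of \cite{JWXZ22}, anisotropizing each step: we produce a pointwise divergence identity on $\S$ whose integral supplies the $F(\nu)$ and $\langle x,\nu\rangle$ terms of \eqref{formu-Minkowski}, and then we obtain the remaining term $\om_0\langle\nu,E^F_{n+1}\rangle$ by evaluating the resulting boundary integral over $\p\S$ with the help of the anisotropic capillary condition \eqref{acbc}.

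We begin by recalling the anisotropic apparatus. Let $A_F=\n^2F+F\,\mathrm{id}$ be the endomorphism along $\ss^n$ associated with $\n^2F+F\sigma$ (positive by hypothesis), let $\Phi_\nu=\Phi\circ\nu$ be the Cahn-Hoffman map of $\S$, and let $S^F=\mathrm d\Phi_\nu\colon T\S\to T\S$ be the anisotropic Weingarten operator; its eigenvalues are the anisotropic principal curvatures, and $\binom{n}{r}H^F_r$ is their $r$-th elementary symmetric function. Define the anisotropic Newton tensors by $P^F_0=\mathrm{id}$ and $P^F_r=\binom{n}{r}H^F_r\,\mathrm{id}-S^F\circ P^F_{r-1}$. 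We will use the standard facts (see \cite{HLMG09} and the references therein): $S^F$ is self-adjoint for the natural anisotropic metric $g_F$ determined by $A_F$; $\tr P^F_{r-1}=(n-r+1)\binom{n}{r-1}H^F_{r-1}$ and $\tr(S^F\circ P^F_{r-1})=r\binom{n}{r}H^F_r$; and, the ambient space being flat, the $P^F_r$ are divergence free. Together with $\on_V\Phi_\nu=S^F(V)$ and $\langle\Phi_\nu,\nu\rangle=F(\nu)$, these yield, exactly as in the derivation of the anisotropic Minkowski formulas for closed hypersurfaces, a pointwise identity of the form
\begin{align*}
\div_{\S}\!\left(P^F_{r-1}\big(X_r^\top\big)\right)=c_{n,r}\left(H^F_{r-1}F(\nu)-H^F_r\langle x,\nu\rangle\right),
\end{align*}
where $X_r$ is a suitable linear combination of the position vector $x$ and of $\Phi_\nu$, $X_r^\top$ its tangential part, and $c_{n,r}>0$ an explicit constant; for $r=1$ one simply has $P^F_0=\mathrm{id}$.

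Integrating this identity over $\S$ and using the divergence theorem gives $\int_\S\big(H^F_{r-1}F(\nu)-H^F_r\langle x,\nu\rangle\big)\,dA=c_{n,r}^{-1}\int_{\p\S}\langle P^F_{r-1}(X_r^\top),\mu\rangle\,ds$, with $\mu$ the outward conormal of $\S$ along $\p\S$; it therefore remains to identify the right-hand side with $-\om_0\int_\S H^F_{r-1}\langle\nu,E^F_{n+1}\rangle\,dA$, since then \eqref{formu-Minkowski} follows by rearrangement. We establish this using three facts. First, along $\p\S$ one has $x\in\p\rr^{n+1}_+$, so $\langle x,E_{n+1}\rangle=0$. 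Second, differentiating \eqref{acbc} tangentially along $\p\S$ gives $\langle S^F(V),E_{n+1}\rangle=0$ for all $V\in T\p\S$; since $\S$ meets $\p\rr^{n+1}_+$ transversally, so that $\langle\mu,E_{n+1}\rangle\ne0$, and since $S^F$ is $g_F$-self-adjoint, a short computation shows that $A_F\mu$ is an anisotropic principal direction, whence $S^F$ and each $P^F_{r-1}$ are block-diagonal along $\p\S$ with respect to $T\S=T\p\S\oplus\rr\,A_F\mu$, with $P^F_{r-1}$ acting on $T\p\S$ through anisotropic curvature quantities of $\p\S\subset\p\rr^{n+1}_+$. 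Third, since \eqref{acbc} is a condition on $\Phi_\nu$ itself it fixes the contact data of $\S$ along $\p\S$, and combining this with the first two facts the boundary integrand can be written as a total divergence on $\p\S$ plus a term tangential to the wetted region $\hat\O:=\p\O\cap\p\rr^{n+1}_+$ and proportional to $\langle x,\bar N\rangle$, where $\bar N$ is the unit normal of $\p\S$ in $\p\rr^{n+1}_+$ pointing out of $\hat\O$. The divergence theorem in the flat $n$-plane $\p\rr^{n+1}_+$ turns the latter into an integral over $\hat\O$ --- for $r=1$ directly into $n|\hat\O|$, and for $r\ge2$ into a combination of such integrals after a further use of the $\rr^n$-Minkowski formulas for $\p\S$. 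Finally, the divergence theorem in $\O$ applied to the constant field $E^F_{n+1}$, whose definition \eqref{eq-E-F} is tailored so that $\langle E^F_{n+1},E_{n+1}\rangle=1$ and the constants line up, turns these $\hat\O$-quantities into $-\om_0\int_\S H^F_{r-1}\langle\nu,E^F_{n+1}\rangle\,dA$; the case $r=1$ is \eqref{formu-Minkowski_1}.

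The main obstacle is the boundary analysis just outlined, above all its higher-order ($r\ge2$) version: block-diagonalizing $P^F_{r-1}$ correctly along $\p\S$, extracting the ``anisotropic conormal'' $A_F\mu$ and its constant-contact behaviour under \eqref{acbc}, propagating it through the flat hyperplane $\p\rr^{n+1}_+$, and --- perhaps most delicately --- keeping track of the constants and signs, in particular the case distinction in \eqref{eq-E-F} according to the sign of $\om_0$, which for genuinely anisotropic $F$ produces two distinct admissible vectors $\Phi(E_{n+1})/F(E_{n+1})$ and $-\Phi(-E_{n+1})/F(-E_{n+1})$. For $r=1$ the argument reduces to the one in \cite{JWXZ22} with the round quantities replaced by their anisotropic analogues. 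We remark that, in contrast with \cref{Thm-HK}, no rigidity statement is involved here: \eqref{formu-Minkowski} holds for every anisotropic $\om_0$-capillary hypersurface, whether or not its anisotropic mean curvature is constant.
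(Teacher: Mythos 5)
Your $r=1$ argument is correct and is essentially the paper's proof: the tangential field $F(\nu)x-\left<x,\nu\right>\nu_F$ has divergence $nF(\nu)-H^F\left<x,\nu\right>$, and the resulting boundary integrand reduces, via \eqref{acbc}, $\left<x,E_{n+1}\right>=0$ on $\p\rr^{n+1}_+$ and $\left<E^F_{n+1},E_{n+1}\right>=1$, to a multiple of $\om_0\left<x,\bar N\right>$, which the divergence theorem (first on $\p\rr^{n+1}_+$, then in $\O$) converts into $-n\om_0\int_\S\left<\nu,E^F_{n+1}\right>dA$. The paper packages exactly this bookkeeping through \cref{Lem-structural} applied to $E^F_{n+1}$, so for $r=1$ the two routes coincide.

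The gap is the case $r\ge2$. Your interior identity $\div_\S\big(P^F_{r-1}(X^\top)\big)=c_{n,r}\big(H^F_{r-1}F(\nu)-H^F_r\left<x,\nu\right>\big)$ is fine (with $X=F(\nu)x-\left<x,\nu\right>\nu_F$, the cross terms cancel because $d\nu\circ(A_F\circ d\nu)^k$ is self-adjoint for the induced metric), and your observation that $A_F\mu$ is an anisotropic principal direction along $\p\S$ is correct. But the plan then requires
\begin{align*}
\int_{\p\S}\left<P^F_{r-1}(X^\top),\mu\right>ds=-c_{n,r}\,\om_0\int_\S H^F_{r-1}\left<\nu,E^F_{n+1}\right>dA,
\end{align*}
whose right-hand side is a \emph{bulk} integral over $\S$ weighted by $H^F_{r-1}$. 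For $r=1$ it collapses to $-\om_0|\hat\O|$ by the divergence theorem in $\O$, but for $r\ge2$ no such reduction to the wetted region exists; producing one would require a further divergence identity $\div_\S W=c\,H^F_{r-1}\left<\nu,E^F_{n+1}\right>$, i.e.\ another Minkowski-type formula of precisely the kind being proved, so the proposed chain ``boundary integral $\to$ integrals over $\hat\O$ $\to$ divergence theorem for $E^F_{n+1}$'' cannot close for $r\ge 2$ as sketched. The paper sidesteps all boundary analysis for higher $r$: it checks that $\varphi_t(x)=x+t(\nu_F(x)+\om_0E^F_{n+1})$ is again an anisotropic $\om_0$-capillary hypersurface (since $\nu_F+\om_0E^F_{n+1}$ is tangent to $\p\rr^{n+1}_+$ along $\p\S$ and $\varphi_t$ preserves $\nu$, hence $\nu_F$), applies the already-established $r=1$ formula to each $\S_t$, and expands via ${\rm J}^\S\varphi_t=\mcP_n(t)$ and $H^F(t)=\mcP_n'(t)/\mcP_n(t)$; equating coefficients of the resulting polynomial in $t$ gives \eqref{formu-Minkowski} for every $r$ simultaneously. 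You should either adopt this device for $r\ge2$ or actually carry out the boundary computation you outline, which as it stands is only a heuristic.
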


\begin{remark}
\normalfont We remark the importance of using the constant vector $E^F_{n+1}$.
In fact, \eqref{eq-HK0} and \eqref{formu-Minkowski} hold true, if we  replace $E^F_{n+1}$ by $E_{n+1}$. However, if one uses $E$ instead of $E^F,$ we  could only prove our main Theorem , \cref{Thm-Alexandrov0},  for a smaller range $
\om_0\in (-1/F^o(E_{n+1}), 1/F^o(-E_{n=1}))$. For one of reasons see \cref{Prop-non-negative}. The main reason lies in the proof of 
the Heintze-Karcher inequality. 
For details, see \cref{rem-1} and \cref{rem-2}. This is one of crucial differences 
between the isotropic case and the anisotropic case.
\end{remark}




For the isotropic case $F\equiv 1$, \eqref{eq-HK0} and \eqref{formu-Minkowski} were proved by the authors \cite{JWXZ22}. We refer to  \cite{JWXZ22} and \cite{WX22} for a historical description of the Heintze-Karcher inequality and the Minkowski formula respectively, and references therein.

The anisotropic Heintze-Karcher inequality and the anisotropic Minkowski formula for closed hypersurfaces have been proved by He-Li-Ma-Ge \cite[Theorem 4.4]{HLMG09} and He-Li \cite{HL08b}. In this case, our argument provides a slight improvement for the anisotropic Heintze-Karcher inequality.

\begin{corollary}\label{coro1}
   Let $\S\subset\bar\mbR^{n+1}$ be a $C^2$ closed embedded strictly anisotropic-mean convex hypersurface. Then it holds
    \begin{align}\label{eq1.6}
     \int_\S\frac{F(\nu)}{H^F}dA\geq
     \frac{n+1}{n}\vert\O\vert + \max\left\{0, \max_{e\in \mathbb{S}^n} \int_\S \frac{ \langle \nu,\Phi (e) \rangle }{H^F}\, dA \right\}.
    \end{align}
    Equality holds if and only if $\S$ is a Wulff shape. 
\end{corollary}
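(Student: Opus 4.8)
The plan is to derive \eqref{eq1.6} from the capillary Heintze--Karcher inequality \cref{Thm-HK}, used at a limiting contact angle and with the distinguished direction $E_{n+1}$ replaced by a variable unit vector. Two elementary facts will be used throughout. First, since $F$ is the support function of $\mathcal W_F$ and $\Phi(e)\in\mathcal W_F$, one has $F(\nu)-\langle\nu,\Phi(e)\rangle\ge0$ for all $\nu,e\in\mathbb{S}^n$, with equality iff $\nu=e$; so the correction term in \eqref{eq1.6} arises from a nonnegative integrand. Second, translating $\S$ in $\mathbb{R}^{n+1}$ changes none of $F(\nu)$, $H^F$, $|\O|$, $\langle\nu,\Phi(e)\rangle$, and a closed hypersurface has empty boundary, on which the hypothesis \eqref{capi-ass} is vacuous. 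Since \cref{Thm-HK} (and its proof) involves $E_{n+1}$ only as the fixed unit inner normal of the half-space, the same statement holds with $E_{n+1}$ replaced by an arbitrary unit vector $e$ and the half-space $\{\langle x,e\rangle>0\}$. Applying it to $\S$, translated to lie inside that half-space, we obtain for every $e\in\mathbb{S}^n$ and every $\om_0\in(-F(e),0)$ — for which the vector in \eqref{eq-E-F} is $\Phi(e)/F(e)$ —
\begin{align*}
\int_\S\frac{F(\nu)+\tfrac{\om_0}{F(e)}\langle\nu,\Phi(e)\rangle}{H^F}\,dA\ \ge\ \frac{n+1}{n}|\O|.
\end{align*}
With $s:=-\om_0/F(e)\in(0,1)$ the left-hand side is affine, hence continuous, in $s$, so letting $s\uparrow 1$ yields
\begin{align*}
\int_\S\frac{F(\nu)-\langle\nu,\Phi(e)\rangle}{H^F}\,dA\ \ge\ \frac{n+1}{n}|\O|,\qquad e\in\mathbb{S}^n.
\end{align*}

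Rearranged, this reads $\int_\S\tfrac{F(\nu)}{H^F}\,dA\ge\tfrac{n+1}{n}|\O|+\int_\S\tfrac{\langle\nu,\Phi(e)\rangle}{H^F}\,dA$ for every $e$. I also record the case $\om_0=0$ of \cref{Thm-HK}, an interior point of the admissible interval (it reduces to the closed anisotropic Heintze--Karcher inequality of \cite{HLMG09}), which gives $\int_\S\tfrac{F(\nu)}{H^F}\,dA\ge\tfrac{n+1}{n}|\O|$. Taking the maximum of the right-hand sides over $e$ — legitimate because $e\mapsto\int_\S\tfrac{\langle\nu,\Phi(e)\rangle}{H^F}\,dA=\langle\Phi(e),\int_\S\tfrac{\nu}{H^F}\,dA\rangle$ is continuous on the compact sphere and so attains its maximum — produces exactly \eqref{eq1.6}.

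For the equality case: if equality holds in \eqref{eq1.6}, then for a maximizing direction $e_0$ one has $\int_\S\tfrac{F(\nu)-\langle\nu,\Phi(e_0)\rangle}{H^F}\,dA=\tfrac{n+1}{n}|\O|$ (and $\int_\S\tfrac{F(\nu)}{H^F}\,dA=\tfrac{n+1}{n}|\O|$ if the maximum equals $0$, in which case the rigidity in \cref{Thm-HK} at $\om_0=0$ already forces $\S$ to be a Wulff shape). To handle the first alternative I would rerun the sweep-out proof of \cref{Thm-HK} (or \cite{HLMG09} in the closed case) directly for this ``$e_0$-shifted'' inequality, replacing the parallel hypersurfaces $x-t\Phi(\nu(x))$ by $x-t\big(\Phi(\nu(x))-\Phi(e_0)\big)$: since $\Phi(e_0)$ is constant, the tangential part of the differential — hence the factors $\prod_i(1-t\kappa_i^F)$ from the anisotropic principal curvatures — is unchanged, while the normal speed becomes $F(\nu)-\langle\nu,\Phi(e_0)\rangle$, so the Jacobian is that of the usual construction with $F(\nu)$ replaced throughout by $F(\nu)-\langle\nu,\Phi(e_0)\rangle\ (\ge0)$. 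The arithmetic--geometric mean step, at equality, then forces the anisotropic principal curvatures to coincide, so $\S$ is anisotropically umbilical, hence a Wulff shape (cf.\ \cite{HLMG09, MR91}). Conversely, on a Wulff shape $H^F$ is constant and $\int_\S\langle\nu,\Phi(e)\rangle\,dA=0$ for every $e$ by the divergence theorem ($\Phi(e)$ being a fixed vector), so the correction term vanishes and \eqref{eq1.6} becomes the sharp basic anisotropic Heintze--Karcher identity.

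The inequality itself is thus a formal consequence of \cref{Thm-HK}; the only non-formal ingredient is in the direct argument needed for the sharp case, namely the surjectivity onto $\O$ of the ``$e$-shifted'' parallel map $x\mapsto x-t(\Phi(\nu(x))-\Phi(e))$ — i.e.\ that these shifted parallel Wulff hypersurfaces still sweep out the enclosed region. This is precisely the point where the anisotropic angle comparison \cref{Prop-angle-compare} enters the proof of \cref{Thm-HK}, and it is if anything simpler here since there is no genuine capillary boundary to control. Going instead through \cref{Thm-HK} as a black box (as above) bypasses this for the inequality part, at the price that the rigidity in the limiting regime $\om_0\to-F(e)$ has to be extracted from the equality case of the sweep-out rather than quoted directly. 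So the main obstacle to watch is this surjectivity/covering statement.
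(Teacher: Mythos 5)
Your argument is correct and is essentially the paper's own proof: both treat the closed hypersurface as a capillary hypersurface with empty boundary, apply \cref{Thm-HK} with an arbitrary direction $e$ in place of $E_{n+1}$, let $\om_0$ tend to the endpoint of the admissible interval so that the coefficient of $\langle\nu,\Phi(e)\rangle/H^F$ tends to $-1$, and then maximize over $e$ (your single-sided range $\om_0\in(-F(e),0)$ over all $e$ is equivalent to the paper's two-sided case split at a fixed direction). Your discussion of the equality case is more detailed than the paper's, which omits it, and your observation that the rigidity must be extracted from the sweep-out at the limiting parameter rather than quoted from \cref{Thm-HK} is a fair point.
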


Finally we follow an argument of Ros \cite{Ros87} by combining \cref{Thm-HK} and \cref{Prop-Minkowski0} to establish  the Alexandrov type theorem for capillary hypersurfaces with constant anisotropic mean curvature, \cref{Thm-Alexandrov0}, and also the Alexandrov type theorem for capillary hypersurfaces with
constant higher order anisotropic mean curvature whose definition will be given in \cref{Sec-2}.
\begin{theorem}\label{Thm-Alexandrov1}
		Let $\om_0\in\left(-F(E_{n+1}),F(-E_{n+1})\right)$. Let $\S\subset \bar \rr^{n+1}_+$ be a $C^2$ embedded compact anisotropic $\om_0$-capillary hypersurface with constant $r$-th anisotropic mean curvature for some $r\in \{2,\ldots,n\}$. Then
	$\S$ is an $\om_0$-Wulff shape.
\end{theorem}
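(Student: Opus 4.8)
The plan is to adapt Ros' argument \cite{Ros87}, in the form already used to prove \cref{Thm-Alexandrov0}, replacing the role of the anisotropic mean curvature by that of the $r$-th anisotropic mean curvature and letting the anisotropic Newton--MacLaurin inequalities do the work that the trivial identity $H^F_1=H^F_1$ did when $r=1$. The one point that is not formal bookkeeping is a preliminary convexity statement: that the vector $\kappa^F=(\kappa^F_1,\dots,\kappa^F_n)$ of anisotropic principal curvatures of $\S$ lies, at every point, in the G{\aa}rding cone $\Gamma_r$ --- the connected component of $\{\sigma_r>0\}$ containing the positive cone (here $\sigma_k$ is the $k$-th elementary symmetric function), on which $\sigma_1,\dots,\sigma_r>0$ and all the Newton--MacLaurin inequalities hold. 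Note that this already gives $H^F=nH^F_1>0$, so that \cref{Thm-HK} becomes applicable, and $H^F_r\equiv c$ for some \emph{positive} constant $c$.

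To prove the convexity statement one may assume $\S$ connected and run the usual touching argument: enclose $\bar\O$ in a large Wulff shape and contract (or translate) it until it first touches $\p\O$. At an interior touching point with $\S$, comparing the anisotropic shape operators of $\S$ and of the strictly convex Wulff shape forces all $\kappa^F_i>0$, hence $\kappa^F\in\Gamma_n\subset\Gamma_r$ and $H^F_r>0$ there; since $H^F_r$ is constant, continuity of $\kappa^F$, connectedness of $\S$, and the fact that $\Gamma_r$ is a \emph{connected component} of $\{\sigma_r>0\}$ propagate $\kappa^F\in\Gamma_r$ to all of $\S$. The delicate point, which I expect to be the real obstacle, is ensuring that the first contact of the contracting Wulff shape can be taken at an \emph{interior} point of $\S$, and not along $\p\S\subset\p\rr^{n+1}_+$; it is precisely here that the hypothesis $\om_0\in(-F(E_{n+1}),F(-E_{n+1}))$ and the transversality of $\S$ with $\p\rr^{n+1}_+$ must be invoked.

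Granting this, the rest is quick. By the divergence theorem $\int_\S\<x,\nu\>\,dA=(n+1)|\O|$, since $\div(x)=n+1$, the outward unit normal of $\O$ along $\p\O\cap\p\rr^{n+1}_+$ is $-E_{n+1}$, and $\<x,-E_{n+1}\>=0$ on $\p\rr^{n+1}_+$. Substituting $H^F_r\equiv c$ into the anisotropic Minkowski formula \eqref{formu-Minkowski} of \cref{Prop-Minkowski0} therefore gives
\[
\int_\S H^F_{r-1}\bigl(F(\nu)+\om_0\<\nu,E^F_{n+1}\>\bigr)\,dA=(n+1)\,c\,|\O|.
\]
On $\Gamma_r$ the anisotropic Newton--MacLaurin inequality reads $H^F_1H^F_{r-1}\ge H^F_r=c$, with equality at a point exactly when all anisotropic principal curvatures coincide there. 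Since $F(\nu)+\om_0\<\nu,E^F_{n+1}\>\ge0$ by \cref{Prop-non-negative} and $H^F=nH^F_1$, multiplying through yields the pointwise estimate
\[
\frac{F(\nu)+\om_0\<\nu,E^F_{n+1}\>}{H^F}\le\frac{1}{nc}\,H^F_{r-1}\bigl(F(\nu)+\om_0\<\nu,E^F_{n+1}\>\bigr).
\]

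Finally, integrating the last estimate and combining it with the displayed Minkowski identity and with the anisotropic Heintze--Karcher inequality \eqref{eq-HK0} of \cref{Thm-HK}, we obtain
\[
\frac{n+1}{n}|\O|\le\int_\S\frac{F(\nu)+\om_0\<\nu,E^F_{n+1}\>}{H^F}\,dA\le\frac{1}{nc}\int_\S H^F_{r-1}\bigl(F(\nu)+\om_0\<\nu,E^F_{n+1}\>\bigr)\,dA=\frac{n+1}{n}|\O|.
\]
Hence every inequality above is an equality; in particular equality holds in \eqref{eq-HK0}, so the rigidity statement in \cref{Thm-HK} forces $\S$ to be an $\om_0$-Wulff shape. (Consistently, the equality case of the Newton--MacLaurin inequality shows $\S$ is totally anisotropically umbilic.)
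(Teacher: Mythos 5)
Your main chain of inequalities is correct and is essentially the paper's own argument (Ros' method): Heintze--Karcher \eqref{eq-HK0} plus the Minkowski formula \eqref{formu-Minkowski} with $H^F_r\equiv c$ plus a Newton--Maclaurin inequality, forcing equality in \eqref{eq-HK0} and hence rigidity. The only cosmetic difference is that you use the single inequality $H^F_1H^F_{r-1}\ge H^F_r$ where the paper uses the pair $H^F_1\ge (H^F_r)^{1/r}$ and $H^F_{r-1}\ge (H^F_r)^{(r-1)/r}$; both are valid in the G{\aa}rding cone and yield the same conclusion.

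The one genuine gap is exactly the step you flag: the existence of an elliptic point (equivalently, getting $\kappa^F$ into $\Gamma_r$ somewhere). Your proposed fix --- arranging that the first contact of a shrinking family of Wulff shapes occurs at an \emph{interior} point of $\S$ --- is not how this can be resolved, and for a generic $F$ and $\om_0$ one cannot rule out first contact along $\p\S$. The paper's \cref{Prop-EllipticPoint} does something different: it fixes $y$ in the interior of the wetted region $\p\O\cap\p\rr^{n+1}_+$ and uses the sliding family $\mcW_r(y+r\om_0E^F_{n+1})$ whose centers move in the direction $\om_0E^F_{n+1}$. The point of this choice is the identity
\begin{align*}
\left\<\nu^{\mcW}_F(x),-E_{n+1}\right\>=\left\<\tfrac{x-y}{r}+\om_0E^F_{n+1},-E_{n+1}\right\>\cdot(-1)\Big|_{\<x-y,E_{n+1}\>=0}=\om_0
\quad\text{for all }x\in\p\S,
\end{align*}
i.e.\ every member of the family satisfies the \emph{same} anisotropic capillary condition as $\S$ on $\p\rr^{n+1}_+$. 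Consequently, even if the first exterior touching point $x_0$ lies on $\p\S$, the matching capillary data (via the angle comparison \cref{Prop-angle-compare}) force $\nu(x_0)=\nu^{\mcW}(x_0)$, so the two hypersurfaces are tangent there and the comparison of anisotropic Weingarten maps still gives $\kappa^F_i(x_0)\ge 1/r_0$ for all $i$. Once this is in place, your propagation of $\kappa^F\in\Gamma_r$ by constancy of $H^F_r$, continuity, and connectedness of the cone is fine, and the rest of your proof goes through verbatim.
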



The rest of the paper is organized as follows.
In \cref{Sec-2}, we provide more details about the anisotropic mean curvature and
the higher order anisotropic mean curvature, together with 
the Wulff shape and the truncated  Wulff shape, 
the $\om_0$-Wulff shape.
In \cref{Sec-3}, we prove the Minkowski type formula in \cref{Prop-Minkowski0} and the Heintze-Karcher type inequality in \cref{Thm-HK}. In \cref{Sec-4}, we prove the Alexandrov type theorem, \cref{Thm-Alexandrov0} and \cref{Thm-Alexandrov1}.

\

\section{Preliminaries}\label{Sec-2}

Let $F: \mathbb{S}^n\to \mathbb{R}_+$ be a $C^2$ positive function on $\mathbb{S}^n$ such that $(\nabla^2 F+F \sigma)>0.$
We denote $$A_F=\nabla^2 F+F\sigma.$$
Let $F^o:\mbR^{n+1}\ra\mbR$ be defined by
\begin{align*}
    F^o(x)=\sup\left\{\frac{\left<x,z\right>}{F(z)}\Big| z\in\mbS^n\right\},
\end{align*}
where $\left<\cdot,\cdot\right>$
 denotes the standard Euclidean inner product.
We collect some well-known facts on $F$ and $F^o$, see e.g. \cite{HLMG09}.
\begin{proposition}\label{basic-F}For any $z\in \mbS^n$ and  $t>0$, the following statements hold.
\begin{itemize}
    \item[(i)] $F^o(tz)=tF^o(z)$.
\item[(ii)] $\left<\Phi(z),z\right>=F(z)$.
\item[(iii)] $F^o(\Phi(z))=1$.
\item[(iv)] The following Cauchy-Schwarz inequality holds:
\begin{align}\label{Cau-Sch}\<x, z\>\le F^o(x)F(z).
\end{align}
\item[(v)]  The unit
  Wulff shape $\mcW_F$ can be interpreted by $F^o$ as $$\mcW_F=\{x\in\mbR^{n+1}|F^o(x)=1\}.$$
\end{itemize}
\end{proposition}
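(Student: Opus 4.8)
The plan is to verify the five assertions in turn, relying on only two structural inputs: the definition of $F^o$ as a supremum, and the fact that the hypothesis $A_F=\nabla^2F+F\sigma>0$ encodes the convexity of the one-homogeneous extension of $F$. First I would dispatch (i), (ii) and (iv), which are direct. For (i), since $t>0$ one may pull the positive factor out of the supremum $F^o(tz)=\sup_{w\in\mbS^n}\langle tz,w\rangle/F(w)=t\,F^o(z)$ (the same computation gives positive one-homogeneity of $F^o$ on all of $\mbR^{n+1}$). For (iv), fixing $z\in\mbS^n$ and regarding $z$ as one competitor in the supremum defining $F^o(x)$ yields $\langle x,z\rangle/F(z)\le F^o(x)$, which rearranges to $\langle x,z\rangle\le F^o(x)F(z)$ since $F(z)>0$. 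For (ii) I would use only that the intrinsic gradient $\nabla F(z)$ lies in $T_z\mbS^n=z^\perp$, so $\langle\nabla F(z),z\rangle=0$ and hence $\langle\Phi(z),z\rangle=\langle\nabla F(z)+F(z)z,z\rangle=F(z)$.

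The core is (iii), $F^o(\Phi(z))=1$, which is where convexity enters. By (ii), taking $w=z$ as a competitor gives $F^o(\Phi(z))\ge\langle\Phi(z),z\rangle/F(z)=1$; the real content is the reverse, equivalent to the support inequality $\langle\Phi(z),w\rangle\le F(w)$ for all $w\in\mbS^n$. To establish it I would pass to the one-homogeneous extension $\bar F(x):=|x|F(x/|x|)$ on $\mbR^{n+1}$. A standard computation shows that on the sphere its Euclidean gradient is the Cahn-Hoffman map, $\nabla\bar F(z)=\Phi(z)$ for $z\in\mbS^n$, and that the Euclidean Hessian of $\bar F$ annihilates the radial direction while restricting to $A_F$ on tangential directions; thus $A_F>0$ says precisely that $\bar F$ is convex. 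The first-order convexity inequality $\bar F(w)\ge\bar F(z)+\langle\nabla\bar F(z),w-z\rangle$, applied at $z,w\in\mbS^n$ and combined with (ii), becomes $F(w)\ge F(z)+\langle\Phi(z),w\rangle-F(z)=\langle\Phi(z),w\rangle$, which is exactly the support inequality; dividing by $F(w)>0$ and taking the supremum over $w$ gives $F^o(\Phi(z))\le1$.

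Finally, for (v) I would identify $\mcW_F=\Phi(\mbS^n)$ with $\{F^o=1\}$ by proving both inclusions. The inclusion $\Phi(\mbS^n)\subseteq\{F^o=1\}$ is precisely (iii). For the reverse inclusion I would argue by attainment: given $x$ with $F^o(x)=1$, the supremum defining $F^o(x)$ is attained at some $z_0\in\mbS^n$ by compactness, so $\langle x,z_0\rangle=F(z_0)$; moreover $G(z):=F(z)-\langle x,z\rangle=F(z)\big(1-\langle x,z\rangle/F(z)\big)$ is nonnegative on $\mbS^n$ and vanishes at $z_0$, hence $z_0$ is a global minimizer of $G$. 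Writing the first-order condition $\nabla_{\mbS^n}G(z_0)=0$ and using that the spherical gradient of the linear function $z\mapsto\langle x,z\rangle$ at $z_0$ is its tangential part $x-\langle x,z_0\rangle z_0=x-F(z_0)z_0$, I obtain $\nabla F(z_0)=x-F(z_0)z_0$, that is $x=\nabla F(z_0)+F(z_0)z_0=\Phi(z_0)\in\Phi(\mbS^n)$.

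I expect the only genuine difficulty to be (iii): the remaining items follow from the definition of the supremum, tangency on the sphere, or the elementary attainment argument, whereas (iii) requires promoting the infinitesimal condition $A_F>0$ to the global support inequality $\langle\Phi(z),w\rangle\le F(w)$. The cleanest device is the convex one-homogeneous extension $\bar F$, for which I must verify the two standard identities $\nabla\bar F|_{\mbS^n}=\Phi$ and $\nabla^2\bar F|_{T\mbS^n}=A_F$; once these are in place the global inequality is immediate from the first-order characterization of convexity, and all five statements assemble at once.
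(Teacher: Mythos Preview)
The paper does not prove this proposition: it simply records the five statements as ``well-known facts'' and refers to \cite{HLMG09}. Your proposal therefore cannot be compared against an in-paper argument, but it is correct and is in fact the standard route one finds in the cited literature: (i), (ii), (iv) are immediate from the definitions, (iii) is the substantive point and your reduction to the convexity of the one-homogeneous extension $\bar F$ (with $\nabla\bar F|_{\mbS^n}=\Phi$ and $\nabla^2\bar F|_{T\mbS^n}=A_F$, radial direction in the kernel) is exactly how it is usually established, and (v) follows from (iii) plus the attainment/first-order argument you give. One small remark: in (iii) your Hessian computation yields $\nabla^2\bar F\ge 0$ (not $>0$, because of the radial kernel), but this is all that is needed for convexity and the support inequality $\langle\Phi(z),w\rangle\le F(w)$.
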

A Wulff shape of radius $r$ centered at $x_0\in \rr^{n+1}$ is given by \begin{align*}
\mcW_{r_0}(x_0)=\{x\in\mbR^{n+1}|F^o(x-x_0)=r_0\}.
\end{align*}

Let $\S\subset\rr^{n+1}_+$ be a $C^2$ hypersurface with $\p\S\subset \p\rr^{n+1}_+$, which encloses a bounded domain $\O$. Let $\nu$ be the unit normal of $\S$ pointing outward $\O$. 
The anisotropic normal of $\S$ is given $$\nu_F=\Phi(\nu)=\nabla F(\nu)+F(\nu)\nu,$$
and the anisotropic principal curvatures $\{\kappa_i^F\}_{i=1}^n$ of $\S$ are given by the eigenvalues of the anisotropic Weingarten map $$d\nu_F=A_F(\nu)\circ d\nu: T_p\S\to T_p\S.$$ The eigenvalues are real since $(A_F)$ is positive definite and symmetric.
For $r\in \{1,\cdots, n\}$, the (normalized) $r$-th anisotropic mean curvature is defined by
$$H_r^F=\frac{1}{\binom{n}{r}}\sigma_r^F,$$ where $\sigma_r^F$ be the $r$-th elementary symmetric function on the anisotropic principal curvatures $\{\kappa_i^F\}_{i=1}^n$, namely,
\begin{align*}
    \sigma_r^F=\sum_{1\leq i_1<\cdots<i_r\leq n}\kappa^F_{i_1}\cdots\kappa^F_{i_r},
\end{align*}
In particular, $H^F=\sigma_1^F$ is the anisotropic mean curvature and $H_1^F$ the normalized anisotropic mean curvature.
Alternatively, the $r$-th anisotropic mean curvature $H^F_r$ of $\S$ can be defined through the following identity:
\begin{align}\label{defn-Pn(t)}
    \mcP_n(t)=\prod_{i=1}^n(1+t\kappa^F_i)=\sum_{i=0}^n\binom{n}{i}H^F_it^i
\end{align}
for all real number $t$.

It is easy to check that the anisotropic principal curvatures of $\mcW_r(x_0)$ are $\frac 1{r}$, since 
\begin{align}\label{normal}
    \nu_F(x)=\frac{x-x_0}{r},  \quad\hbox{ on } \mcW_r(x_0).\end{align}
For the convenience of the reader, we provide a proof of \eqref{normal}. We first consider the unit Wulff shape, $\mcW_F.$
 Since the unit normal vector $\nu$ of $\mcW_F$ at $x \in \mcW_F$ is given by $\Phi^{-1}(x)$, then the anisotripic normal is just $x$. For the general case, one use a translation and a scaling.


A truncated Wulff shape is a part of a Wulff shape  cut by a hyperplane, say $\{x_{n+1}= 0\}$. Namely, it is an intersection of a Wulff shape and $\R^{n+1}_+$. As mentioned above, it was used by Winterbottem \cite{Win67}. At a first glimpse, it is not very easy to image why the hyperplane intersects 
a Wulff shape at a ``constant angle" as in the isotropic case, namely \eqref{acbc} holds. It follows from
$$ \langle \nu_F, -E_{n+1} \rangle = \langle \frac {x-x_0} r, -E_{n+1} \rangle= \langle \frac {x_0} r, -E_{n+1} \rangle,  $$
which is a constant.

\begin{remark}\label{rem-necc}
\normalfont
The boundary condition $\<\Phi(\nu), -E_{n+1}\>=\omega_0$ implies $\om_0\in (-F(E_{n+1}),F(-E_{n+1}))$. Indeed, by the Cauchy-Schwarz inequality \eqref{Cau-Sch}, 
 \begin{align}\label{om-cond}
 -F(E_{n+1})=-F(E_{n+1})F^o(\Phi(\nu))\leq  \<\Phi(\nu),-E_{n+1}\>\leq F^o(\Phi(\nu))F(-E_{n+1})=F(-E_{n+1}).
 \end{align}
Since $\S$ is embedded, $\S$ intersects $\p\rr_+^{n+1}$ transversely. It follows that equality in \eqref{om-cond} cannot hold.
Therefore, $\om_0\in(-F(E_{n+1}),F(-E_{n+1}))$ is a necessary condition for anisotropic $\om_0$-capillary hypersurfaces. 
\end{remark}

From our work  in this paper, one can in fact  introduce a notion of ``anisotropic contact angle" as follows, which is natural generalization of
the contact angle in the isotropic case.
We define $\theta :\p \S \to (0, \pi)$ by
\begin{eqnarray*}
   - \cos \theta = \left\{ \begin{array}{cl}
    F(E_{n+1})^{-1}  \langle \nu_F, -E_{n+1} \rangle, & \quad \hbox{ if }\langle \nu_F, -E_{n+1} \rangle <0 ,\\
    0, &\quad \hbox{ if } 
    \langle \nu_F, -E_{n+1} \rangle=0, \\
     F(-E_{n+1})^{-1}  \langle \nu_F, -E_{n+1} \rangle, & \quad \hbox{ if }\langle \nu_F,- E_{n+1} \rangle >0.\\
    \end{array}
    \right.
    \end{eqnarray*}
If $\theta=\pi/2$, or equivalently  $\langle \nu_F, -E_{n+1} \rangle =0$, we call that the anisotropic hypersurface intersects $\p \R^{n+1}_+$
perpendicularly, or it is a free boundary anisotropic hypersurface.

\

\section{Minkowski type formula and Heintze-Karcher type inequality}\label{Sec-3}

\subsection{Minkowski type formula}
To prove the Minkowski type formula, we need the following structural lemma for compact hypersurfaces in $\rr^{n+1}$ with boundary, which is well-known and widely used, see for example \cite{AS16,JXZ22}.
	\begin{lemma}\label{Lem-structural}
	Let $\S\subset \rr^{n+1}$ be a compact hypersurface with boundary. Then it holds that
		\begin{align}\label{eq-structural}
			n\int_{\S}\nu dA=\int_{\p \S}\left\{\left<x,\mu\right>\nu-\left<x,\nu\right>\mu\right\}ds.
		\end{align}
		
	\end{lemma}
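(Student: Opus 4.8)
The plan is to establish the structural identity \eqref{eq-structural} by integrating a suitable ambient vector field over $\S$ and applying the divergence theorem on $\S$, picking up a boundary term on $\p\S$ that gives precisely the right-hand side. The key observation is that for a fixed vector $a\in\rr^{n+1}$, the tangential projection of $a$ along $\S$, namely $a^T=a-\<a,\nu\>\nu$, has a divergence on $\S$ that involves the mean curvature, which is not what we want; instead we should work with a vector field built so that the curvature terms cancel. The cleanest route is to consider, for each fixed $a\in\rr^{n+1}$, the tangential vector field $X_a := \<x,a\>x^T - \tfrac12\<x,x\> a^T$ on $\S$, or more economically to exploit the Killing-type field $x\wedge a$ (the restriction of the rotational vector field) whose tangential part is divergence-free in the appropriate sense.

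Concretely, I would fix $a \in \rr^{n+1}$ and define the tangential vector field on $\S$ by $Z = \<x,a\>\,x^T - \tfrac12 |x|^2\, a^T$, where $(\cdot)^T$ denotes the projection onto $T\S$. A direct computation of $\div_\S Z$ using $\n_\S x = \mathrm{Id} - \nu\otimes\nu$ (the tangential part of the position field) and $\n_\S(\<x,a\>) = a^T$, $\n_\S(|x|^2) = 2x^T$, together with $\div_\S(x^T) = n - H\<x,\nu\>$ and $\div_\S(a^T) = -H\<a,\nu\>$, shows that all the mean-curvature terms cancel and one is left with $\div_\S Z = n\,\<a,\nu\>\,\<x,\nu\rangle$-free... — more precisely the computation is arranged so that $\div_\S Z = n\langle a,\nu\rangle \cdot(\text{something})$; I will choose the combination of the two terms so that $\div_\S Z = n\langle a, \nu\rangle$ fails and instead I should use the antisymmetric field. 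The honest and standard choice is: test against the constant field and the rotation field. Since the statement is linear in the conclusion and $a$ is arbitrary, it suffices to prove $n\int_\S \<a,\nu\> dA = \int_{\p\S}\big(\<x,\mu\>\<a,\nu\> - \<x,\nu\>\<a,\mu\>\big)ds$ for every constant vector $a$, and this follows by applying the divergence theorem on $\S$ to the tangential field $W_a := \<a,\nu\>x^T - \<x,\nu\>\, a^\top$-type combination, or most transparently by integrating the tangential $2$-form obtained from $\iota_a\iota_x(\text{volume form})$ and invoking Stokes' theorem, where the exterior derivative reproduces $n\langle a,\nu\rangle\,dA$ and the boundary integrand is exactly $\<x,\mu\>\<a,\nu\> - \<x,\nu\>\<a,\mu\>$.

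Thus the steps are: (1) reduce to the scalar identity tested against an arbitrary constant vector $a$; (2) introduce the $(n-1)$-form $\alpha_a$ on $\S$ obtained by contracting the Euclidean volume form of $\rr^{n+1}$ with the position field $x$ and with $a$, pulled back to $\S$; (3) compute $d\alpha_a$ along $\S$ and verify it equals $n\<a,\nu\>\,dA$, using that $dx$ restricted to $\S$ spans $T\S$ and the normal direction contributes the Jacobian factor $\<a,\nu\>$; (4) apply Stokes' theorem on $\S$ with boundary $\p\S$ and identify the boundary integrand, using the oriented coframe adapted to $(\mu,\nu)$ along $\p\S$, as $\<x,\mu\>\<a,\nu\> - \<x,\nu\>\<a,\mu\>$; (5) since $a$ was arbitrary, conclude the vector identity \eqref{eq-structural}. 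The main obstacle, such as it is, is purely bookkeeping: getting the orientation conventions for $\mu$ (the outward conormal of $\p\S$ in $\S$) and $\nu$ consistent so that no sign errors enter in step (4), and confirming that the curvature terms genuinely cancel in the differential-form computation (equivalently that one does not need any hypothesis on $H$). No embeddedness, convexity, or capillary condition is used — this is a general identity for compact hypersurfaces with boundary, which is why it is stated as a standalone lemma.
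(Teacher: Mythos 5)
The paper does not actually prove \cref{Lem-structural}; it cites it as well known. Your plan is nevertheless the standard argument behind the cited references, and it is correct: reduce to the scalar identity for an arbitrary constant vector $a$, then either apply Stokes' theorem to the $(n-1)$-form $\iota_a\iota_x\omega$ ($\omega$ the Euclidean volume form), for which one computes $d(\iota_a\iota_x\omega)=-n\,\iota_a\omega$ via the Cartan formula and $\mathcal L_x\omega=(n+1)\omega$, or equivalently apply the divergence theorem on $\S$ to the tangential field $W_a=\<a,\nu\>x^T-\<x,\nu\>a^T$, whose divergence is exactly $n\<a,\nu\>$ because the two curvature contributions $h(a^T,x^T)$ and $h(x^T,a^T)$ cancel by the symmetry of the second fundamental form (this symmetry is the one substantive point you must invoke; it is not mere bookkeeping). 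Two remarks. First, the middle of your write-up contains a false start: the field $Z=\<x,a\>x^T-\tfrac12|x|^2a^T$ does not produce the right divergence and should simply be deleted rather than left as a self-correcting aside; as written the argument reads as exploratory notes rather than a proof. Second, your final conclusion is right that no embeddedness, curvature, or capillary hypothesis is needed, and the boundary integrand $\<x,\mu\>\<a,\nu\>-\<x,\nu\>\<a,\mu\>$ comes out with the stated sign precisely when $\mu$ is the \emph{outward} conormal and $ds=\iota_\mu\,dA$, so the orientation check you flag is indeed the only remaining detail to pin down.
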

In the paper we denote  $\mu$  the unit outward co-normal of $\p\S$ in $\S$.
Recall $E^{F}_{n+1}$ defined in \eqref{eq-E-F}. It is easy to check that
\begin{align}\label{eq-E-F-1}
\<E^{F}_{n+1},E_{n+1} \>=1.
\end{align}


\

\noindent{\it Proof of \cref{Prop-Minkowski0}.} We first prove \eqref{formu-Minkowski_1}.
    We begin by introducing the following $C^1$ vector field along $\S$:
    \begin{align*}
        X_F(x)=F(\nu(x))x-\left<x,\nu(x)\right>\nu_F(x).
    \end{align*} Observe that $X_F$ is indeed a tangential vector field along $\S$, since
    \begin{align*}
        \left<X_F,\nu\right>=F(\nu)\left<x,\nu\right>-\left<x,\nu\right>\left<\nu_F,\nu
        \right>=0.
    \end{align*}
Notice also that along $\S$ we have
\begin{align}\label{div-X}
    {\rm div}_\S(X_F)
    =&nF(\nu)+\<d\nu(\nabla F),x\>-\left<x,d\nu(\nu_F^T)\right>-\left<x,\nu\right>H^F=nF(\nu)- H^F\left<x,\nu\right>,
\end{align}
where ${\rm div}_\S$ is the divergence on $\S$.
In the second equality, we have used the self-adjointness of $d\nu$. Here $\nu^T_F$ and $x^T$ denote the tangential projection on $\S$ of $\nu_F$ and $x$ respectively. In particular, $\nu^T_F=\nu_F-\<\nu_F, \nu\>\nu=\nabla F(\nu)$.
On  one hand, integrating \eqref{div-X} along $\S$ and using the divergence theorem, we find
\begin{align}\label{eq-Minko-1}
    \int_\S nF(\nu)-H^F(x)\left<x,\nu\right>dA=\int_{\p\S}F(\nu)\left<x,\mu\right>-\left<x,\nu\right>\left<\nu_F,\mu\right>ds.
\end{align}
On the other hand, by \eqref{eq-structural} we have
\begin{align}\label{eq-Minko-2}
    -n\om_0\int_\S\left<\nu, E^F_{n+1}\right>dA=\int_{\p\S}(-\left<x,\mu\right>\left<\nu,E^F_{n+1}\right>\om_0+\left<x,\nu\right>\left<\mu,E^F_{n+1}\right>\om_0) ds.
\end{align}
It is easy to see that  at any $x\in\p\S\subset\p\mbR^{n+1}_+$
\begin{align*}
    E_{n+1}=\left<\nu,E_{n+1}\right>\nu+\left<\mu,E_{n+1}\right>\mu,
\end{align*}
and  hence we have 
\begin{align}
    -\om_0&=\left<E_{n+1},\nu_F\right>=\left<\nu,E_{n+1}\right>F(\nu)+\left<\mu,E_{n+1}\right>\left<\nu_F,\mu\right>,\label{eq-Minko-om}\\
    0&=\left<E_{n+1},x\right>=\left<\nu,E_{n+1}\right>\left<x,\nu\right>+\left<\mu,E_{n+1}\right>\left<x,\mu\right>.\label{eq-Minko-0}
    \end{align}
    Moreover by \eqref{eq-E-F-1} we have 
    \begin{align}
    1=\<E^F_{n+1},E_{n+1}\>=\<\nu,E_{n+1}\>\<E^F_{n+1},\nu\>+\<\mu,E_{n+1}\>\<E^F_{n+1},\mu\>.\label{eq-Minko-F}
\end{align}
It yields
\begin{align*}
&-\left<x,\mu\right>\left<\nu,E^{F}_{n+1}\right>\om_0+\left<x,\nu\right>\left<\mu,E^{F}_{n+1}\right>\om_0\\
   =&\left<x,\mu\right>\left<\nu,E^{F}_{n+1}\right>\left<\nu,E_{n+1}\right>F(\nu)-\left<x,\nu\right>\left<\mu,E^{F}_{n+1}\right>\left<\nu,E_{n+1}\right>F(\nu)\\
   &+\left<x,\mu\right>\left<\nu,E^{F}_{n+1}\right>\left<\mu,E_{n+1}\right>\left<\nu_F,\mu\right>-\left<x,\nu\right>\left<\mu,E^{F}_{n+1}\right>\left<\mu,E_{n+1}\right>\left<\nu_F,\mu\right>\\
   =&\left<x,\mu\right>\left<\nu,E^{F}_{n+1}\right>\left<\nu,E_{n+1}\right>F(\nu)+\left<x,\mu\right>\left<\mu,E_{n+1}\right>\left<\mu,E^{F}_{n+1}\right>F(\nu)\\
   &-\left<x,\nu\right>\left<\nu,E_{n+1}\right>\left<\nu,E^{F}_{n+1}\right>\left<\nu_F,\mu\right>+\left<x,\nu\right>\left<\mu,E^{F}_{n+1}\right>\left<\mu,E_{n+1}\right>\left<\nu_F,\mu\right>\\
   =&F(\nu)\left<x,\mu\right>-\left<x,\nu\right>\left<\nu_F,\mu\right>,
\end{align*}
where we have used \eqref{eq-Minko-om} in the first equality,  \eqref{eq-Minko-0} in the second equality and \eqref{eq-Minko-F} in the last one. In particular, this identity, together with \eqref{eq-Minko-1} and \eqref{eq-Minko-2}, implies 
\begin{align*}
    \int_\S nF(\nu)-H^F(x)\left<x,\nu\right>dA=-n\int_\S\left<\nu,\om_0 E^F_{n+1}\right>dA,
\end{align*}
which is \eqref{formu-Minkowski_1}.

Next we prove \eqref{formu-Minkowski} for general $r$ by using \eqref{formu-Minkowski_1} as in \cite{JWXZ22}.
Consider a family of hypersurfaces  $\S_t$ with boundary for small $t>0$,
defined by 
		$$\varphi_t(x)=x+t(\nu_F(x)+\om_0E^{F}_{n+1})\quad x\in\S.$$
	We claim that $\S_t$ is also an anisotropic $\om_0$-capillary hypersurface in $\mbR^{n+1}_+$.
	On one hand, the $\om_0$-capillarity condition and \eqref{eq-E-F-1} yield that for any $x\in\p\S$,
\begin{align*}
    \left<\nu_F(x)+\om_0E^{F}_{n+1},-E_{n+1}\right>=\om_0-\om_0=0.
\end{align*}
Hence, $\varphi_t(x)\in \p\rr_+^{n+1}$ for $x\in \p\S$ which means $\p\S_t\subset \p\rr_+^{n+1}$.
 On the other hand, denoting by $e^F_i$ an anisotropic principal vector at $x\in\S$ corresponding to $\kappa_i^F$ for $i=1,\cdots, n$, we have
		\begin{align}\label{eq-principaldirections}
	 (\varphi_t)_\ast(e^F_i)=(1+t\kappa^F_i)e^F_i,\quad i=1,\ldots,n.
	\end{align}
We see from \eqref{eq-principaldirections} that $\nu^{\S_t}(\varphi_t(x))=\nu(x)$,
and in turn $\nu_F^{\S_t}(\varphi_t(x))=\nu_F(x)$.
Here $\nu^{\S_t}$ and $\nu_F^{\S_t}$ denote the outward normal and anisotropic normal to $\S_t$ respectively.
	In view of this, we have $$\left<\nu_F^{\S_t}(\varphi_t(x)),-E_{n+1}\right>=\left<\nu_F(x),-E_{n+1}\right>=\om_0.$$ Therefore, $\S_t$ is also an anisotropic $\om_0$-capillary hypersurface in $\mbR^{n+1}_+$ and hence \eqref{formu-Minkowski_1}
 holds for $\S_t$ for any small $t$.
 Exploiting \eqref{formu-Minkowski_1} for every such $\S_t$, we find
 \begin{align}\label{eq-PropMin-0} \int_{\S_t=\varphi_t(\S)}
 \left(F(\nu_t)+\om_0\left<\nu_t,E^F_{n+1}\right>\right)-H_1^F(t)\mid_y\left<y,\nu_t\right>
 dA_t(y)=0.
	\end{align}
By \eqref{eq-principaldirections}, the tangential Jacobian of $\varphi_t$ along $\S$ at $x$ is just
\begin{align}\label{eq-PropMin-1}
    {\rm J}^\S\varphi_t(x)=\prod_{i=1}^n(1+t\kappa_i^F(x))=\mcP_n(t),
\end{align}
where $\mcP_n(t)$ is the polynomial defined in \eqref{defn-Pn(t)}.
Moreover, using \eqref{eq-principaldirections} again, we see that
the corresponding anisotropic principal curvatures are given by
\begin{align}\label{eq-PropMin-2}
    \kappa^F_i(\varphi_t(x))=\frac{\kappa^F_i(x)}{1+t\kappa^F_i(x)}.
\end{align}
Hence fix $x\in\S$, the anisotropic mean curvature of $\S_t$ at  $\varphi_t(x)$, say $H^F(t)$, is given by
\begin{align}
\label{29}
    H^F(t)
    =\frac{\mcP'_n(t)}{\mcP_n(t)}
    =\frac{\sum_{i=0}^{n}i\binom{n}{i} H^F_it^{i-1}}{\mcP_n(t)},
\end{align}
where $H^F_i=H^F_i(x)$ is the $i$-th mean curvature of $\S$ at $x$.

Using the area formula, \eqref{eq-PropMin-1} and \eqref{29}, we find from \eqref{eq-PropMin-0} that
\begin{align*}
    \int_\S n\left(F(\nu)+\om_0\left<\nu,E_{n+1}^F\right>\right)\mcP_n(t)-t\left(\left<x,\nu_F(x)\right>+\om_0\left<x,E_{n+1}^F\right>\right)\mcP'_n(t)-\mcP'_n(t)\left<x,\nu\right>dA_x=0.
\end{align*}
As the left hand side in this equality is a polynomial in the time variable $t$, this shows that all its coefficients vanish, and hence a direct computation yields \eqref{formu-Minkowski}.
\qed

We remark that the definition of the family of capillary hypersurfaces $\S_t$ was inspired by \cite{JWXZ22}.
These are the parallel hypersurfaces in the case of capillary boundary.

\begin{remark}\label{rem-1}
\normalfont
If we replace $E^F_{n+1}$ by $E_{n+1}$ in the proof, every step above is valid and we achieve that
\begin{align*}
    \int_\S H^F_{r-1}\left(F(\nu)+\om_0\left<\nu, E_{n+1}\right>\right)-H^F_r\left<x,\nu\right>dA=0.
\end{align*} 
Alternatively, we can prove directly that 
\begin{align*}
    \int_\S H^F_{r-1}\left<\nu, E^F_{n+1}\right>dA=\int_\S H^F_{r-1}\left<\nu, E_{n+1}\right>dA.
\end{align*}
Since we do not need it in this paper, we omit the proof.
\end{remark}

\subsection{Heintze-Karcher type inequality}

To prove the Heintze-Karcher type inequality, we need the following key proposition, which amounts to be an anisotropic angle comparison principle. It is clear that in the isotropic case it is trivial. However in the anisotropic case it is non-trivial.

\begin{proposition}\label{Prop-angle-compare}
 Let $x,z\in\mbS^n$ be two distinct points and $y\in\mbS^n$ lie in a length-minimizing geodesic joining $x$ and $z$ in $\mbS^n$,
then we have $$\left<\Phi(x),z\right>\leq\left<\Phi(y),z\right>.$$ 
Equality holds if and only if $x=y$.
\end{proposition}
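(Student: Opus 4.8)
The plan is to reduce the statement to a one-variable problem along the length-minimizing geodesic from $x$ to $z$. Parametrize this geodesic by arclength, writing $\gamma(s)\in\mbS^n$ with $\gamma(0)=x$, $\gamma(L)=z$, where $L=\mathrm{dist}_{\mbS^n}(x,z)\in(0,\pi)$; the point $y$ corresponds to some $s_0\in[0,L]$. Fix $z$ and consider the scalar function $g(s):=\left<\Phi(\gamma(s)),z\right>$. Since $\Phi(w)=\nabla F(w)+F(w)w$ is the Cahn–Hoffman map, one has $d\Phi_w = A_F(w)\circ(\text{id on }T_w\mbS^n)$ in the sense that $\Phi$ restricted to $\mbS^n$ has differential $A_F(w)=\nabla^2F(w)+F(w)\sigma$ acting on tangent vectors (this is the standard identity behind the anisotropic Weingarten map used in \cref{Sec-2}). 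Then $g'(s)=\left<A_F(\gamma(s))\gamma'(s),\,z^{\top}\right>$ where $z^{\top}$ is the tangential part of $z$ along $\gamma(s)$, and what we want is precisely that $g(0)\le g(s_0)$ with equality iff $s_0=0$, i.e. that $g$ is nondecreasing on $[0,L]$ and strictly increasing as long as we have not yet reached $z$... more carefully, we only need $g(0)\le g(s_0)$, so it suffices to show $g'(s)\ge 0$ on $[0,s_0]$ with strict inequality away from a negligible set, or directly that $g$ attains its minimum on $[0,s_0]$ only at the left endpoint.

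The key computation is to show $g'(s)\ge 0$ for $s\in[0,L)$, i.e. along the geodesic moving toward $z$ the quantity $\left<\Phi(\cdot),z\right>$ cannot decrease. First I would reduce to the $2$-plane $P=\mathrm{span}\{x,z\}$ (equivalently the great circle through $x$ and $z$): since $\gamma$ lies in $P\cap\mbS^n$, we have $\gamma(s)=(\cos s)\,x+(\sin s)\,v$ for a suitable unit $v\in P$ with $\langle v,x\rangle=0$, and $\gamma'(s)$ is the unit tangent pointing along the circle in the direction of increasing $s$. Write $z=(\cos L)x+(\sin L)v$ as well. One computes $z^{\top}=z-\langle z,\gamma(s)\rangle\gamma(s)$ and finds $z^{\top}$ is a positive multiple of $\gamma'(s)$ precisely when $0\le s< L$ (because the tangential component of $z$ at $\gamma(s)$ points toward $z$ along the shorter arc), with the multiple being $\sin(L-s)\ge 0$. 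Hence $g'(s)=\sin(L-s)\,\left<A_F(\gamma(s))\gamma'(s),\gamma'(s)\right>$, and this is $\ge 0$ because $A_F=\nabla^2F+F\sigma>0$ by hypothesis, and strictly positive for $s\in[0,L)$. Integrating from $0$ to $s_0$ gives $\left<\Phi(y),z\right>-\left<\Phi(x),z\right>=g(s_0)-g(0)=\int_0^{s_0}\sin(L-s)\left<A_F(\gamma(s))\gamma'(s),\gamma'(s)\right>\,ds\ge 0$, with equality iff $s_0=0$, i.e. $y=x$.

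The step I expect to be the main obstacle is getting the differential of $\Phi$ cleanly and identifying the tangential component $z^{\top}$ with a nonnegative multiple of $\gamma'(s)$ along the minimizing geodesic — in particular making sure the sign works out for the entire half-open interval $[0,L)$ and breaks down exactly at $s=L$ (which is why $y$ is required to lie on a \emph{length-minimizing} geodesic, so $L<\pi$ and $\sin(L-s)>0$ for $s<L$). Once the formula $g'(s)=\sin(L-s)\,\langle A_F(\gamma(s))\gamma'(s),\gamma'(s)\rangle$ is established, positivity of $A_F$ finishes everything, including the rigidity statement. A minor point to handle with care is the behavior at $s=0$ if $x$ happens to be such that $z^\top$ vanishes there (it does not, since $x\ne z$ forces $L>0$), so the argument is uniform.
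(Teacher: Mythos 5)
Your proposal is correct and follows essentially the same route as the paper: both differentiate $s\mapsto\left<\Phi(\gamma(s)),z\right>$ along the arclength-parametrized minimizing geodesic, decompose $z$ at $\gamma(s)$ into tangential and radial parts so that the derivative becomes $\sin(L-s)\,A_F(\gamma(s))(\gamma'(s),\gamma'(s))\ge 0$, and conclude strict monotonicity (hence the rigidity) from $A_F>0$. No substantive difference.
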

\begin{proof}
We denote  $d_0=d_{\mbS^n}(x,z)$ and $d_1=d_{\mbS^n}(x,y)$, where $d_{\mbS^n}$ denotes the intrinsic distance on $\mbS^n$.
If $y\neq x$, clearly $0<d_1\leq d_0$.
Let $\gamma:[0,d_0]\to \mbS^n$ be the arc-length parameterized geodesic with $\gamma(0)=x$, $\gamma(d_0)=z$.  Considering the following function $$f=\<\Phi(\gamma(t)),z\>, \quad t\in [0,d_0].$$ We have
\begin{align}\label{eq-prop}
  & \<\Phi(y),z\>-\<\Phi(x),z\>=f(d_1)-f(0)\nonumber\\ =& \int_0^{d_1} \left<\frac{d}{dt}\Phi(\gamma(t)),z\right>dt=\int_0^{d_1}\<D_{\dot{\gamma}(t)}\Phi(\gamma(t)), z\> dt,
\end{align}
where $D$ is the Euclidean covariant derivative.
Since $\gamma$ is length-minimizing, it is easy to see that  $$\<\dot{\g}(t), z\>\ge 0, \quad \forall t\in (0, d_0).$$ Thus $z$ can be expressed as $z=\sin s\dot{\gamma}(t)+\cos s\gamma(t)$ with some $s\in(0,\pi)$. It follows that
\begin{align*}
  \<D_{\dot{\gamma}(t)}\Phi(\gamma(t)), z\>=\sin s (\nabla^2 F+F I)(\dot{\gamma}(t),\dot{\gamma}(t)),
\end{align*}
Since  $(\nabla^2 F+F I)>0$,
we get $\<D_{\dot{\gamma}(t)}\Phi(\gamma(t)), z\>>0$ for any $t\in(0,d_1)$.
This fact, together with \eqref{eq-prop}, leads to the assertion.
\end{proof}

We note that when $y=z$, \cref{Prop-angle-compare} is nothing but the Cauchy-Schwarz inequality \eqref{Cau-Sch}, since one readily observes from \cref{basic-F}(ii)(iii) that
\begin{align*}
    \left<\Phi(x),z\right>\leq F^o(\Phi(x))F(z)=\left<\Phi(z),z\right>.
\end{align*}
The Cauchy-Schwarz inequality \eqref{Cau-Sch} also impies the following property. 
\begin{proposition}\label{Prop-non-negative}
For $\om_0\in\left(-F(E_{n+1}),F(-E_{n+1})\right)$, there holds
\begin{align*}
F(z)+\om_0\left<z,E^F_{n+1}\right>>0,\quad\text{for any }z\in\mbS^n.
\end{align*}
\end{proposition}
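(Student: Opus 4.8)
The plan is to reduce the inequality to the Cauchy--Schwarz inequality \eqref{Cau-Sch} of \cref{basic-F}(iv), treating the two cases $\om_0<0$ and $\om_0>0$ separately (the case $\om_0=0$ being trivial since $F>0$). Consider first $\om_0<0$. By definition \eqref{eq-E-F}, $E^F_{n+1}=\Phi(E_{n+1})/F(E_{n+1})$, so $\langle z,E^F_{n+1}\rangle=\langle z,\Phi(E_{n+1})\rangle/F(E_{n+1})$. The quantity to bound is
\[
F(z)+\om_0\,\frac{\langle z,\Phi(E_{n+1})\rangle}{F(E_{n+1})}.
\]
First I would apply \cref{Prop-angle-compare} with the roles arranged so that $\langle\Phi(E_{n+1}),z\rangle\le\langle\Phi(z),z\rangle=F(z)$ (this is exactly the $y=z$ endpoint case noted right after \cref{Prop-angle-compare}, i.e. the Cauchy--Schwarz inequality $\langle\Phi(E_{n+1}),z\rangle\le F^o(\Phi(E_{n+1}))F(z)=F(z)$). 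Since $\om_0<0$, multiplying $\langle z,\Phi(E_{n+1})\rangle\le F(z)$ by $\om_0/F(E_{n+1})<0$ reverses the inequality, giving
\[
F(z)+\frac{\om_0}{F(E_{n+1})}\langle z,\Phi(E_{n+1})\rangle\;\ge\;F(z)+\frac{\om_0}{F(E_{n+1})}F(z)=F(z)\Bigl(1+\frac{\om_0}{F(E_{n+1})}\Bigr).
\]
Because $\om_0>-F(E_{n+1})$ by hypothesis, the factor $1+\om_0/F(E_{n+1})$ is strictly positive, and $F(z)>0$, so the right-hand side is $>0$. This settles the case $\om_0<0$.

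For $\om_0>0$ the argument is symmetric: now $E^F_{n+1}=-\Phi(-E_{n+1})/F(-E_{n+1})$, so $\om_0\langle z,E^F_{n+1}\rangle=-\frac{\om_0}{F(-E_{n+1})}\langle z,\Phi(-E_{n+1})\rangle$. Applying Cauchy--Schwarz in the form $\langle z,\Phi(-E_{n+1})\rangle\le F^o(\Phi(-E_{n+1}))F(z)=F(z)$ and multiplying by $-\om_0/F(-E_{n+1})<0$ gives
\[
F(z)+\om_0\langle z,E^F_{n+1}\rangle\;\ge\;F(z)\Bigl(1-\frac{\om_0}{F(-E_{n+1})}\Bigr),
\]
which is strictly positive since $\om_0<F(-E_{n+1})$.

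I do not anticipate a serious obstacle here; the only point requiring a little care is bookkeeping the sign of $\om_0$ when multiplying the Cauchy--Schwarz inequality through, and correctly unwinding the definition \eqref{eq-E-F} of $E^F_{n+1}$ in each case so that the relevant inner product is $\langle z,\Phi(\pm E_{n+1})\rangle$, to which \eqref{Cau-Sch} (equivalently the endpoint case $y=z$ of \cref{Prop-angle-compare}) directly applies. One could alternatively phrase the whole argument uniformly by writing $E^F_{n+1}=\mathrm{sgn}(\om_0)^{-1}\cdot(\pm\Phi(\mp E_{n+1})/F(\mp E_{n+1}))$, but splitting into the two sign cases is cleaner and avoids any ambiguity.
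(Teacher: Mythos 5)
Your proposal is correct and follows essentially the same route as the paper: both proofs split on the sign of $\om_0$, unwind the definition \eqref{eq-E-F} of $E^F_{n+1}$, and reduce the claim to the Cauchy--Schwarz inequality \eqref{Cau-Sch} together with $F^o(\Phi(\pm E_{n+1}))=1$ and the hypothesis $-F(E_{n+1})<\om_0<F(-E_{n+1})$. The only cosmetic difference is that the paper first restricts to the $z$ where the term $\om_0\langle z,E^F_{n+1}\rangle$ is negative and then applies Cauchy--Schwarz, whereas you multiply the Cauchy--Schwarz bound through by the negative coefficient to get the uniform lower bound $F(z)\bigl(1\pm\om_0/F(\pm E_{n+1})\bigr)>0$; both are equally valid.
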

\begin{proof}
It is clear that we need not to consider the case $\omega_0=0$. If $\omega_0<0$, we only need to consider the points $z$ satisfying $\<z,E^F_{n+1}\>>0$. At any such $z$, since $\omega_0>-F(E_{n+1})$, we have
\begin{align*}
F(z)+\om_0\left<z, E^F_{n+1}\right>
>&F(z)-F(E_{n+1})\left<z, \frac{\Phi(E_{n+1})}{F(E_{n+1})}\right>\nonumber\\
\geq& F(z)-F(z)F^o(\Phi(E_{n+1}))=0,
\end{align*}
where we have used the Cauchy-Schwarz inequality for the second inequality.


For the case $\omega_0>0$, 
 we just need to consider the points $z$ such that $\<z,E^F_{n+1}\><0$.
 Since $\omega_0<F(-E_{n+1})$, using the Cauchy-Schwarz inequality again, we find
\begin{align*}
F(z)+\om_0\left<z, E^F_{n+1}\right>
>& F(z)+F(-E_{n+1})\left<z,-\frac{\Phi(-E_{n+1})}{F(-E_{n+1})}\right>\nonumber\\
\geq&F(z)-F(z)F^o(\Phi(-E_{n+1}))=0.
\end{align*}
The proposition is thus proved.
\end{proof}

Now we can start to prove \cref{Thm-HK}.

\begin{proof}[Proof of \cref{Thm-HK}]
    Let $\S\subset\bar\mbR_+^{n+1}$ be an anisotropic capillary hypersurface satisfying \eqref{capi-ass}.
For any $x\in \S$, let $\kappa^F_i(x)$ be the  anisotropic principal curvature and $e^F_i(x)$ be the corresponding anisotropic principal vector of $\S$ at $x$ such that $|e^F_1\wedge e^F_2\wedge\cdots \wedge e^F_n|=1$. Since $\S$ is strictly anisotropic mean convex,
\begin{align*}
    \max_i{\kappa^F_i(x)}\ge \frac1n H^F(x)>0, \hbox{ for }x\in \S.
\end{align*}
We define 
\begin{align*}
    Z=\left\{(x,t)\in\S\times\mbR:0<t\leq\frac{1}{\max{\kappa^F_i(x)}}\right\},
   \end{align*} 
   and
   \begin{align}
    &\zeta_F: Z\to \rr^{n+1},\\
    &\zeta_F(x,t)=x-t\left(\nu_F(x)+\om_0 E^{F}_{n+1}\right).\label{parallel}
\end{align}

\noindent{\bf Claim:} $\O\subset\zeta_F(Z)$.

Recall $\mcW_r(x_0)$ is the Wulff shape centered at $x_0$ with radius $r$. 
For any $y\in\O$, we consider a family of Wulff shapes 
$\{ \mcW_r(y+r\om_0 E^F_{n+1})\}_{r\geq0}$.
Since $y\in\O$ is an interior point,  we definitely have $\mcW_r(y+r\om_0 E^F_{n+1})\subset\O$ for $r$ small enough.
On the other hand, by the assumption $-F(E_{n+1})<\om_0< F(-E_{n+1})$, the definition \eqref{eq-E-F} of $E_{n+1}^F$ and \cref{basic-F}(i)(iii), it is easy to see that  \begin{align*}
    F^o(-\om_0 E^F_{n+1})< 1.
\end{align*} 
It follows that
\begin{align*}
   F^o\big( y-(y+r\omega_0 E^F_{n+1})\big)=r F^o(-\omega_0 E^F_{n+1})< r,
\end{align*}
which implies for any small $r>0$, $y$ is always in the domain bounded by the Wulff shape $\mcW_r(y+r\om_0 E^{F}_{n+1})$.
Hence $\mcW_r(y+r\om_0E^F_{n+1})$
must touch $\S$ as we increase the radius $r$.
Consequently, for any $y\in\O$, there exists $x\in\S$ and $r_y>0$, such that $\mcW_{r_y}(y+r_y\om_0 E^F_{n+1})$ 
touches $\S$ for the first time, at some point $x\in \S$.
In terms of the touching point, only the following two cases are possible:

\textbf{Case 1.} $x\in\mathring{\Sigma}$. 

In this case, since $x\in\mathring{\S}$, 
the Wulff shape $\mcW_{r_y}(y+r_y\om_0 E^F_{n+1})$ is tangent to $\S$ at $x$ from the interior.
Hence 
\begin{align}\label{eq-nu} \nu(x)=\nu^{\mcW}(x),
\end{align} 
where $\nu^{\mcW}$  denotes the outward unit normal  of $\mcW_{r_y}(y+r_y\om_0 E^F_{n+1})$.
Moreover, since the touching of $\mcW_{r_y}(y+r_y\om_0 E^F_{n+1})$ with $\S$ is from interior, we see that 
 \begin{align}\label{dnu-eq}
    d\nu\le d\nu^\mcW,
\end{align} 
in the sense that
the coefficient matrix of the difference of two classical Weingarten operators $d\nu-d\nu^\mcW$ is semi-negative definite. It follows from \eqref{eq-nu} and \eqref{dnu-eq} that that  \begin{align}\label{an-dnu-eq} A_F(\nu)\circ d\nu\le A_F(\nu^\mcW) \circ d\nu^\mcW.
\end{align}
Since the  anisotropic principal curvatures of $\mcW_{r_y}(y-r_y\om_0 E^F_{n+1})$ are equal to $\frac 1{r_y}$, we see from \eqref{an-dnu-eq} that
\begin{align*}
\max_{1\leq i\leq n}{\kappa^F_{i}(x)}\le\frac 1{r_y}.
\end{align*} 
Invoking the definition of $Z$ and $\zeta_F$, we find that $y\in\zeta_F(Z)$ in this case.


\textbf{Case 2.} $x\in\p\S$. 

We will rule out this case by the capillarity assumption \eqref{capi-ass}.
Let $\nu_F^\mcW(x)$ be the outward anisotropic normal to $\mcW_{r_y}(y+r_y\om_0 E^F_{n+1})$. It is easy to see that $$\nu_F^\mcW(x)=\Phi(\nu^\mcW(x))=\frac{x-(y+r_y\om_0 E^{F}_{n+1})}{r_y}.$$ 
Recall that $y$ lies in the interior of $\Om$. Thus $\left<y,E_{n+1}\right>>0$. On one hand, in view of  \eqref{capi-ass} and \eqref{eq-E-F-1} we have
\begin{align}\label{angle-contr}
    \left<\nu_F^\mcW(x),-E_{n+1}\right>=1/r_y\left<y,E_{n+1}\right>+\om_0\<E^{F}_{n+1},E_{n+1}\> >\om_0\geq\om(x)=\left<\nu_F(x),-E_{n+1}\right>.
\end{align}
On the other hand,
since the Wulff shape $\mcW_{r_y}(y+r_y\om_0 E^F_{n+1})$ touches $\S$ from the interior, we have
\begin{align*}
\left\<\nu(x),-E_{n+1}\right>\geq\left<\nu^\mcW(x),-E_{n+1}\right>.
\end{align*}
Since $\nu$, $\nu^\mcW$ and $-E_{n+1}$ lie on the two-plane orthogonal to $T_x(\p\S)$, we see that
 $\nu$ lies actually in the geodesic joining $\nu^\mcW$ and $-E_{n+1}$ in $\mbS^n$.
It then follows from the angle comparison principle \cref{Prop-angle-compare} that 
\begin{align*}
\left\<\Phi(\nu(x)),-E_{n+1}\right>\geq\left<\Phi(\nu^\mcW(x)),-E_{n+1}\right>.
\end{align*}
This is a contradiction to \eqref{angle-contr}.
The {\bf Claim} is thus proved.
 
By a simple computation, we find
\begin{align*} 
\p_t \zeta_F(x, t)&=-\left(\nu_F(x)+\om_0 E^F_{n+1}\right), \\ 
D_{e^F_i} \zeta_F(x, t)&=\left(1-t\kappa^F_i(x)\right)e^F_i(x).
\end{align*}
Thanks to \cref{Prop-non-negative},
a classical computation yields that the tangential Jacobian of $\zeta_F$ along $Z$ at $(x,t)$ is just
$${\rm J}^Z\zeta_F(x,t)= (F(\nu)+\om_0\<\nu, E^F_{n+1}\>)\prod_{i=1}^n(1-t\kappa^F_i).$$
By virtue of the fact that $\O\subset\zeta_F(Z)$, the area formula yields
\begin{align*}
    \vert\O\vert\leq\vert\zeta_F(Z)\vert
    \leq&\int_{\zeta_F(Z)}\mcH^0(\zeta_F^{-1}(y))dy
    =\int_Z {\rm J}^Z\zeta_F d\mcH^{n+1}\notag\\
    =&\int_\Sigma dA\int_0^{\frac{1}{\max\left\{\kappa^F_i(x)\right\}}}\left (F(\nu)+\om_0\left<\nu,E^F_{n+1}\right>\right)\prod_{i=1}^n(1-t\kappa^F_i(x))dt.
\end{align*}
By the AM-GM inequality,
and the fact that $\max\left\{\kappa^F_i(x)\right\}_{i=1}^n\geq \frac1n H^F(x)$, we obtain
\begin{align*}
    \vert\O\vert
    \leq&\int_\S dA\int_0^{\frac{1}{\max\left\{\kappa^F_i(x)\right\}}} \left(F(\nu)+\om_0\<\nu, E^F_{n+1}\>\right)\left(\frac{1}{n}\sum_{i=1}^n\left(1-t\kappa^F_i(x)\right)\right)^n dt\notag\\
    \leq&\int_\S \left(F(\nu)+\om_0\<\nu, E^F_{n+1}\>\right)dA\int_0^{\frac{n}{H^F(x)}} \left(1-t\frac{H^F(x)}{n}\right)^n dt\notag\\
    =&\frac{n}{n+1}\int_\S \frac{F(\nu)+\om_0\<\nu, E^F_{n+1}\>}{H^F} dA,
\end{align*}
which gives \eqref{eq-HK0}.

If equality in \eqref{eq-HK0} holds, then from the above argument, we see  $\kappa_1^F(x)=\ldots=\kappa_n^F(x)$ for all $x\in\S$.
It follows from \cite[Lemma 2.3]{HLMG09}  that $\S$ must be a part of a Wulff shape  $\mcW_{r_0}(x_0)$ for some $r_0$ and some point $x_0$. Hence $H^F$ is a constant $\frac{n}{r_0}$ and since $\nu_F(x)=\frac{x-x_0}{r_0}$, we get for $x\in \p\S$,
$$\omega(x)=\<\nu_F(x), -E_{n+1}\>=\frac{1}{r_0}\<x_0,E_{n+1}\>:=\tilde{\omega}_0,$$
which is a constant.

From the equality in  Heintze-Karcher inequality and the Minkowski-type formula \eqref{formu-Minkowski}, taking into account that $H_F$ is a constant, we deduce that
$$\omega_0\int_\S \left<\nu,E_{n+1}^F\right\>dA=\tilde{\omega}_0\int_\S \left<\nu,E_{n+1}^F\right\>dA.$$
By the divergence theorem and \eqref{eq-E-F-1}, $$\int_\S\left<\nu,E_{n+1}^F\right\>dA=|\p\O\cap \p\rr_+^{n+1}|\neq 0.$$
It follows that $\tilde{\omega}_0=\omega_0$ which means $\S$ is a $\om_0$-capillary Wulff shape.

Conversely, for any $\om_0$-capillary Wulff shape, we can see easily from the Minkowski-type formula \eqref{formu-Minkowski} and  the fact of constant anisotropic mean curvature that equality holds in \eqref{eq-HK0}.
This completes the proof.
\end{proof}

\begin{remark}\label{rem-2}
\normalfont
We may use in the proof another foliation of Wulff shapes $\{\mcW_r(y+r\o_0E_{n+1})\}_{r\ge 0}$. To ensure that $\mcW_r(y+r\o_0E_{n+1})$ intersects with $\S$ for large $r$, we need to assume \begin{align}\label{assum-omega0}\o_0\in \left(-\frac{1}{F^o(E_{n+1})}, \frac{1}{F^o(-E_{n+1})}\right).\end{align}
We can follow the proof to achieve that
     \begin{align}\label{eq-HK0-1}
     \int_\S\frac{F(\nu)+\om_0\left<\nu,E_{n+1}\right>}{H^F}dA\geq\frac{n+1}{n}\vert\O\vert.
    \end{align}
under the assumption \eqref{assum-omega0}. On the other hand, by virtue of the Cauchy-Schwarz inequality, we see that \eqref{assum-omega0} is in general more restrictive than the natural assumption $\o_0\in (-{F(E_{n+1}), {F(-E_{n+1})}}).$ This is the reason why we introduce $E^F_{n+1}$.
\end{remark}

\noindent{\it Proof of \cref{coro1}}. 
It is clear that any closed hypersurface can be seen as a capillary surface in a half space  with a empty boundary.  For any $e\in \mathbb{S}^n$ we can see $e$ as $E_{n+1}$ and apply \cref{Thm-HK}.

First  consider $\om_0 \in (-F(E_{n+1}), 0)$. Together with the definition of $E_{n+1}^F$ \eqref{eq-HK0} gives us 
    \begin{align*}
     \int_\S\frac{F(\nu)}{H^F}dA\geq\frac{n+1}{n}\vert\O\vert 
     - \om_0  \int_\S \frac {\langle \nu, E^F_{n+1} \rangle}{H^F} \, dA
     =\frac{n+1}{n}\vert\O\vert 
     - \frac{\om_0}{F(E_{n+1})}  \int_\S \frac {\langle \nu,  \Phi(E_{n+1}) \rangle}{H^F} \, dA.
    \end{align*}
   It follows  that
\begin{align*}
     \int_\S\frac{F(\nu)}{H^F}dA\geq\frac{n+1}{n}\vert\O\vert 
     + \max\left\{0, \int_\S \frac {\langle \nu,  \Phi(E_{n+1}) \rangle}{H^F} \, dA \right\}.
    \end{align*}
   Then we consider $\om_0 \in (0, F(-E_{n+1}))$. Similarly, in this case \eqref{eq-HK0} gives us 
  \begin{align*}
     \int_\S\frac{F(\nu)}{H^F}dA &\geq
      \frac{n+1}{n}\vert\O\vert 
     +\max\left\{0, \int_\S \frac {\langle \nu,  \Phi(-E_{n+1}) \rangle}{H^F} \, dA.\right\}.
    \end{align*}
This completes the proof of \eqref{eq1.6}. 
\qed

\begin{remark}
	\normalfont
When $F$ is even, i.e., $F(x)=F(-x)$, we have
     \begin{align*}
     \int_\S\frac{F(\nu)}{H^F}dA\geq
     \frac{n+1}{n}\vert\O\vert +  \max_{e\in \mathbb{S}^n} \int_\S \frac{ \langle \nu,\Phi (e) \rangle }{H^F}\, dA \, \left(\ge \frac{n+1}{n}\vert\O\vert\right).
    \end{align*}
Since in this case, $\Phi(-E_{n+1})=-\Phi(E_{n+1})$. Hence either $\int_\S \frac {\langle \nu,  \Phi(E_{n+1}) \rangle}{H^F} \, dA\ge 0$ or $\int_\S \frac {\langle \nu,  \Phi(-E_{n+1}) \rangle}{H^F} \, dA\ge 0$.

\end{remark}
\

\section{Alexandrov type Theorem}\label{Sec-4}
We first prove a result on the existence of an elliptic point for an anisotropic capillary hypersurface.
\begin{proposition}\label{Prop-EllipticPoint}
Let $\om_0\in\left(-F(E_{n+1}),F(-E_{n+1})\right)$ and
let $\S\subset\mbR^{n+1}_+$ be a $C^2$ compact embedded anisotropic $\om_0$-capillary hypersurface, then $\S$ has at least one elliptic point, i.e. a point where all the anisotropic principal curvatures are positive.
\end{proposition}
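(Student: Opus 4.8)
The plan is to adapt the classical ``largest enclosing ball'' argument to the anisotropic capillary setting: I enclose $\bar\O$ in a one-parameter family of Wulff shapes which contain $\bar\O$ once their parameter is large and shrink to a point as the parameter tends to $0$, and then look at the smallest one still containing $\bar\O$. It touches $\p\O$ somewhere, and at a contact point lying in $\mathring\S$ the surface $\S$ is at least as anisotropically convex as the touching Wulff shape, so all its anisotropic principal curvatures are positive there. The whole difficulty will be to choose the family so that contact on $\p\mbR^{n+1}_+$ is impossible. Recall from the proof of \cref{Thm-HK} that $F^o(-\om_0 E^F_{n+1})<1$; by continuity I may therefore fix $\d>0$ with $F^o(-\om_0 E^F_{n+1}+\d E_{n+1})<1$ and set $\b:=\om_0 E^F_{n+1}-\d E_{n+1}$, so that $F^o(-\b)<1$ while $\<\b,E_{n+1}\>=\om_0-\d$ by \eqref{eq-E-F-1}. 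Fix any $p\in\p\mbR^{n+1}_+$ and, for $r>0$, let $W_r:=\{x\in\mbR^{n+1}:F^o(x-c(r))\le r\}$ with $c(r):=p+r\b$, whose boundary $\mcW_r(c(r))$ is a strictly convex $C^2$ Wulff shape. The $1$-homogeneity and subadditivity of $F^o$ give at once that $W_r\subset\mathrm{int}\,W_{r'}$ for $0<r<r'$, that $F^o(z-c(r))-r=r\bigl(F^o(\tfrac{z-p}{r}-\b)-1\bigr)\to-\infty$ locally uniformly in $z$ as $r\to\infty$ (so $\bar\O\subset\mathrm{int}\,W_r$ for $r$ large), and that $W_r$ collapses to $\{p\}$ as $r\to0$ (so $\bar\O\not\subset W_r$ for $r$ small). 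Hence $R:=\inf\{r>0:\bar\O\subset W_r\}$ is finite and positive; by continuity and monotonicity $\bar\O\subset W_R$ while $\mcW_R(c(R))$ touches $\p\O$, say at $x^*$, and I write $\nu^{\mcW}$ for the outward unit normal of $W_R$ at $x^*$, so that $\Phi(\nu^{\mcW})=\tfrac1R(x^*-c(R))$ by \eqref{normal}.

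The main obstacle is to prove $x^*\in\mathring\S$. Since $\p\O$ is the disjoint union of $\mathring\S$, $\p\S$, and the relative interior $T$ of the wetted region $\overline{\p\O\cap\p\mbR^{n+1}_+}$, I must exclude $x^*\in T$ and $x^*\in\p\S$. If $x^*\in T$, then $\bar\O$ coincides with $\{x_{n+1}\ge0\}$ near $x^*$, so (as $\bar\O\subset W_R$ and $x^*\in\p W_R$) the outward normal $\nu^{\mcW}$ must be $-E_{n+1}$; then $W_R\subset\{x_{n+1}\ge0\}$ and, by strict convexity, $W_R\cap\{x_{n+1}=0\}=\{x^*\}$, whereas $\es\ne\p\S\subset\bar\O\subset W_R$ and $\p\S\subset\{x_{n+1}=0\}$ would force $\p\S=\{x^*\}$ --- impossible. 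If $x^*\in\p\S$, then by the capillary condition \eqref{acbc} and $\om_0\in(-F(E_{n+1}),F(-E_{n+1}))$ the intersection of $\S$ with $\p\mbR^{n+1}_+$ at $x^*$ is transversal (cf.\ \cref{rem-necc}), so $\nu(x^*)\ne\pm E_{n+1}$ and the tangent cone of $\bar\O$ at $x^*$ is the wedge $\{v:\<v,\nu(x^*)\>\le0,\ \<v,E_{n+1}\>\ge0\}$. Since $\bar\O\subset W_R$, this wedge lies inside the half-space $\{v:\<v,\nu^{\mcW}\>\le0\}$, which forces $\nu^{\mcW}=a\,\nu(x^*)-b\,E_{n+1}$ with $a,b\ge0$; equivalently, $\nu^{\mcW}$ lies on the (unique, length-minimizing) geodesic in $\mbS^n$ from $\nu(x^*)$ to $-E_{n+1}$. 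Then \cref{Prop-angle-compare} together with \eqref{acbc}, \eqref{normal}, $x^*,p\in\p\mbR^{n+1}_+$ and \eqref{eq-E-F-1} yields
\begin{align*}
\om_0=\<\Phi(\nu(x^*)),-E_{n+1}\>&\le\<\Phi(\nu^{\mcW}),-E_{n+1}\>=\<\tfrac1R(x^*-c(R)),-E_{n+1}\>\\
&=\tfrac1R\<c(R),E_{n+1}\>=\<\b,E_{n+1}\>=\om_0-\d,
\end{align*}
a contradiction. This exclusion of contact along $\p\S$ is the crux, and the tilt $-\d E_{n+1}$ put into $\b$ is exactly what makes every enclosing Wulff shape $\mcW_r(c(r))$ meet $\p\mbR^{n+1}_+$ at anisotropic angle strictly below $\om_0$, so that \cref{Prop-angle-compare} rules out its being tangent to $\S$ from outside along $\p\S$.

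Consequently $x^*\in\mathring\S$. There $\S$ and $\mcW_R(c(R))$ are tangent with common outward normal $\nu(x^*)=\nu^{\mcW}$, and the inclusion $\bar\O\subset W_R$ gives, by the standard comparison of second fundamental forms, $d\nu\ge d\nu^{\mcW}$ at $x^*$ (the reverse of \eqref{dnu-eq}, since now the Wulff shape contains $\O$). Composing with the positive-definite symmetric operator $A_F(\nu(x^*))=A_F(\nu^{\mcW})$ and recalling that all anisotropic principal curvatures of $\mcW_R(c(R))$ equal $\tfrac1R$, one concludes $\kappa^F_i(x^*)\ge\tfrac1R>0$ for every $i$. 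Thus $x^*$ is an elliptic point of $\S$, as desired.
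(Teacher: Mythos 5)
Your proof is correct, and it follows the same overall strategy as the paper's: shrink a one-parameter family of Wulff shapes $\mcW_r(p+r\b)$ translated along a ray based at $\p\mbR^{n+1}_+$ until the first exterior contact with the surface, then read off the anisotropic principal curvature bound $\kappa^F_i\ge 1/R$ at the touching point via the Weingarten comparison. The difference is in how boundary contact is treated. The paper takes $\b=\om_0E^F_{n+1}$ with base point $y$ in the interior of the wetted region, so that by \eqref{eq-nuW-E} the sliding Wulff shapes meet $\p\mbR^{n+1}_+$ at exactly the anisotropic angle $\om_0$; a touching point on $\p\S$ is then not excluded but is shown to be a point of tangency, and the curvature comparison is carried out there as well. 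You instead tilt the axis to $\b=\om_0E^F_{n+1}-\d E_{n+1}$, which lowers the Wulff shapes' contact angle to $\om_0-\d$ and lets \cref{Prop-angle-compare} rule out contact along $\p\S$ through a strict inequality; since you enclose all of $\bar\O$ rather than just $\S$, you also must (and correctly do) exclude contact in the relative interior of the wetted region. What your variant buys is that the final second-fundamental-form comparison only ever takes place at an interior point of $\S$, avoiding the one-sided Hessian comparison at a boundary touching point that the paper's shorter argument implicitly relies on; what it costs is the extra bookkeeping with $\d$, the subadditivity of $F^o$ for nestedness, and the wetted-region case. Both arguments are sound, and your tangent-cone/polar-cone justification that $\nu^{\mcW}$ lies on the minimizing geodesic from $\nu(x^*)$ to $-E_{n+1}$ is exactly the point the paper leaves terse.
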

\begin{proof}
   We fix a point $y\in {\rm int}(\p\O\cap\p\mbR_+^{n+1})$. Consider the family of  Wulff shapes $\mcW_r(y+r\om_0 E^F_{n+1})$.
    Observe that for any $x\in\p\S$ and any $r>0$, there holds
    \begin{align}\label{eq-nuW-E}
    \left<\nu_F^\mcW(x),E_{n+1}\right>=\left<\frac{x-y}{r}+\om_0E^F_{n+1},E_{n+1}\right>=\om_0=\left<\nu_F(x),E_{n+1}\right>.
    \end{align}
Since $\S$ is compact, for $r$ large enough, $\S$ lies inside the domain bounded by the Wulff shape $\mcW_r(y+r\om_0 E^F_{n+1})$.
 Hence we can find the smallest $r$, say $r_0>0$, such that  $\mcW_{r_0}(y+r_0\om_0 E^F_{n+1})$ touches $\S$ at a first time at some $x_0\in\S$ from exterior.

If $x_0\in\mathring{\S}$, then $\S$ and $\mcW_{r_0}(y+r_0\om_0 E^F_{n+1})$ are tangent at $x$.
If $x_0\in\p\S$, from \eqref{eq-nuW-E},
we conclude again that  $\S$ and $\mcW_{r_0}((y+r_0\om_0 E^F_{n+1}))$ are tangent at $x$.
In both cases, by a similar argument as in the proof of \cref{Thm-HK}, we have that the anisotropic principal curvatures of $\S$ at $x_0$ are larger  than or equal to $\frac{1}{r_0}$. 
\end{proof}


\begin{proof}[Proof of \cref{Thm-Alexandrov0} and \cref{Thm-Alexandrov1}]
We begin by recalling that $\om_0\in\left(-F(E_{n+1}),F(-E_{n+1})\right)$ ensures the non-negative of $F(\nu)+\om_0\left<\nu,E^F_{n+1}\right>$ pointwisely along $\S$, thanks to \cref{Prop-non-negative}.

On one hand,
by virtue of \cref{Prop-EllipticPoint} and G\"arding's argument \cite{Garding59} (see also \cite[Lemma 2.1]{HLMG09}),
    we know that $H^F_j$ are positive, for $j\le r$ and for any $x\in \S$.
    Applying \cref{Thm-HK} and using the Maclaurin inequality $H_1^F\ge H^F_r$ and the constancy of $H^F_r$,
    we have
    \begin{align}\label{eq-Alex-1} 
    (n+1)(H^F_r)^{1/r}\vert\O\vert
    \leq (H^F_r)^{1/r}\int_\S\frac{F(\nu)+\om_0\left<\nu,E^F_{n+1}\right>}{H_1^F}dA
    \leq\int_\S\left(F(\nu)+\om_0\left<\nu,E^F_{n+1}\right>\right)dA.
    \end{align}
    On the other hand,
    using the Minkowski-type formula \eqref{formu-Minkowski} and the Maclaurin inequality $H_{r-1}^F\ge (H^F_r)^{\frac{r-1}{r}}$, we have
    \begin{align*}
        0
        =&\int_\S H^F_{r-1}\left(F(\nu)+\om_0\left<\nu, E^F_{n+1}\right>\right)-H^F_r\left<x,\nu\right>dA\\
        \geq&\int_\S (H^F_r)^{\frac{r-1}{r}}\left(F(\nu)+\om_0\left<\nu, E^F_{n+1}\right>\right)-H^F_r\left<x,\nu\right>dA\\
        =&(H^F_r)^{\frac{r-1}{r}}\int_\S\left(F(\nu)+\om_0\left<\nu, E^F_{n+1}\right>\right)-(H^F_r)^{\frac{1}{r}}\left<x,\nu\right>dA\\
        =&(H^F_r)^{\frac{r-1}{r}}\left\{\int_\S\left(F(\nu)+\om_0\left<\nu, E^F_{n+1}\right>\right)dA-(n+1)(H^F_r)^{\frac{1}{r}}\vert\Om\vert\right\},
    \end{align*}
     where in the last equality we have used
     \begin{align*}
    (n+1)\vert\O\vert=\int_\O \div{x} \, dx=\int_\S\left<x,\nu\right>dA.
    \end{align*}
     Thus equality in \eqref{eq-Alex-1} holds, and hence $\S$ is an anisotropic $\om_0$-capillary Wulff shape. This completes the proof.
\end{proof}

\printbibliography

\end{document}